\setlist[enumerate]{itemsep=0mm,topsep=0mm}
\setlist[itemize]{itemsep=0mm,topsep=0mm}
\newcommand{\MONTH}{%
  \ifcase\the\month
  \or January% 1
  \or February% 2
  \or March% 3
  \or April% 4
  \or May% 5
  \or June% 6
  \or July% 7
  \or August% 8
  \or September% 9
  \or October% 10
  \or November% 11
  \or December% 12
  \fi}
\renewcommand{\today}{\MONTH \ \number \day, \number \year}
\DeclareMathOperator{\ran}{ran}
\DeclareMathOperator{\ord}{ord}
\DeclareMathOperator{\supp}{supp}
\newcommand{\Span}{\mathrm{Span}}
\newcommand{\fs}{\mathrm{FS}}
\newcommand{\zfc}{\mathsf{ZFC}}
\newcommand{\F}{\mathbb{F}}
\newcommand{\B}{\mathbb{B}}
\newcommand{\fsm}{{\fs_{\mathrm{matrix}}}}
\newtheorem{theorem}{Theorem}
\newtheorem*{theorem*}{Theorem}
\newtheorem{lemma}[theorem]{Lemma}
\newtheorem*{lemma*}{Lemma}
\newtheorem{corollary}[theorem]{Corollary}
\newtheorem*{corollary*}{Corollary}
\newtheorem*{proposition*}{Proposition}
\newtheorem{proposition}[theorem]{Proposition}
\newcounter{pcount}
\theoremstyle{definition}
\newtheorem{definition}[theorem]{Definition}
\begin{document}

\title[Uncountably many colours and finite monochromatic sets]{Hindman-like theorems with uncountably many colours and finite monochromatic sets}

\author{David Fern\'andez-Bret\'on}
\address{Department of Mathematics\\
	University of Michigan\\
	2074 East Hall, 530 Church Street \\
	Ann Arbor, MI 48109-1043, U.S.A.\\
}
\curraddr{
Kurt G\"odel Research Center for Mathematical Logic \\
University of Vienna \\
W\"aringer Stra\ss e 25, 1090 Wien, Austria.
}

\email{david.fernandez-breton@univie.ac.at}
\urladdr{https://homepage.univie.ac.at/david.fernandez-breton/}
%\thanks{Fern\'andez-Bret\'on was partially supported by postdoctoral fellowship number 275049, Conacyt--Mexico.}

\author{Sung Hyup Lee}
\address{Department of Mathematics, University of California at Berkeley.}
\email{sunghlee@berkeley.edu}
%\thanks{Lee was supported in part by NSF Grant \#DMS-1401384, as part of the University of Michigan Department of Mathematics REU program.}
\date{\today}

\subjclass[2010]{Primary 03E02. Secondary 03E05, 05D10, 05C55.}

\keywords{Hindman's theorem, Ramsey theory, set theory, combinatorial set theory, uncountable cardinals, abelian groups.} 

\begin{abstract}
A particular case of the Hindman--Galvin--Glazer theorem states that, for every partition of an infinite abelian group $G$ into two cells, there will be an infinite $X\subseteq G$ such that the set of its finite sums $\{x_1+\cdots+x_n \mid n\in\mathbb N\wedge x_1,\ldots,x_n\in X\text{ are distinct}\}$ is monochromatic. It is known that the same statement is false, in a very strong sense, if one attempts to obtain an uncountable (rather than just infinite) $X$. On the other hand, a recent result of Komj\'ath states that, for partitions into uncountably many cells, it is possible to obtain monochromatic sets of the form $\fs(X)$, for $X$ of some prescribed finite size, when working with sufficiently large Boolean groups. In this paper, we provide a generalization of Komj\'ath's result, and we show that, in a sense, this generalization is the strongest possible.
\end{abstract}

\maketitle

\setcounter{section}{0}
\section{Introduction}

Ramsey-type theorems are statements that assert the existence of rich monochromatic substructures whenever some big ambient structure is coloured (partitioned). A very notable such result, which constitutes the starting point for this paper, is Hindman's theorem~\cite{hindmanoriginal}, stated below (we actually state a very general version of this result, which is usually known as the {\em Hindman--Galvin--Glazer} theorem).

\begin{theorem*}[See~\cite{hindmanstrauss}]
Let $G$ be any infinite abelian group, and suppose that we colour its elements with finitely many colours. Then there is an infinite $X\subseteq G$ such that the set
 \begin{equation*}
  \fs(X)=\left\{\sum_{x\in F}x \mid F\subseteq X\text{ is finite nonempty}\right\}
 \end{equation*}
 of finite sums of elements of $X$ (with no repetitions) is monochromatic.
\end{theorem*}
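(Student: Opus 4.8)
The plan is to prove the theorem by the Galvin--Glazer ultrafilter method, exploiting the algebraic structure of the Stone--\v Cech compactification $\beta G$. First I would extend the group operation to the space $\beta G$ of ultrafilters on $G$ by declaring, for $p,q\in\beta G$, that $A\in p+q$ exactly when $\{g\in G : (-g+A)\in q\}\in p$, where $-g+A=\{h\in G : g+h\in A\}$. A routine verification shows that this operation is associative and that, for each fixed $q$, the map $p\mapsto p+q$ is continuous, so $(\beta G,+)$ is a compact Hausdorff right-topological semigroup. Since $G$ is a (cancellative) group, the remainder $G^{*}=\beta G\setminus G$, consisting of the nonprincipal ultrafilters, is a closed subsemigroup.

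Next I would invoke Ellis's theorem (every compact Hausdorff right-topological semigroup contains an idempotent) applied to $G^{*}$, obtaining a nonprincipal ultrafilter $p$ with $p+p=p$. Given the finite colouring, exactly one colour class $A$ belongs to $p$, since an ultrafilter respects any finite partition; and because $p$ is nonprincipal, every member of $p$ is infinite. The combinatorial heart is then the following standard observation about idempotents, which I would isolate as a lemma: if $A\in p$ and we set
\[
A^{\star}=\{g\in A : (-g+A)\in p\},
\]
then $A^{\star}\in p$ and, moreover, $(-g+A^{\star})\in p$ for every $g\in A^{\star}$. The first assertion holds because $p+p=p$ forces $\{g : (-g+A)\in p\}\in p$, and the second is a further application of the same identity.

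Finally I would construct $X=\{x_n : n\in\N\}$ recursively so that $\fs(X)\subseteq A^{\star}\subseteq A$, maintaining the invariant that every sum over a nonempty set of the $x_i$ chosen so far lies in $A^{\star}$. Having chosen distinct $x_1,\dots,x_n$ with this property, the set $A^{\star}\cap\bigcap\{(-s+A^{\star}) : s\text{ a finite sum of }x_1,\dots,x_n\}$ is a finite intersection of members of $p$, hence itself in $p$ and therefore infinite; choosing $x_{n+1}$ in it, distinct from $x_1,\dots,x_n$, preserves the invariant, since $x_{n+1}\in A^{\star}$ handles the new singleton sum while $x_{n+1}\in -s+A^{\star}$ puts each new sum $s+x_{n+1}$ back into $A^{\star}$. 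The resulting infinite $X$ then has $\fs(X)\subseteq A$, which is monochromatic.

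Assuming the algebraic background on $\beta G$, the step I expect to be the crux is the twofold use of the idempotent identity in the lemma: one must place $A^{\star}$ in $p$ \emph{and} simultaneously guarantee $(-g+A^{\star})\in p$ for each $g\in A^{\star}$, as it is precisely this self-reproducing feature of $A^{\star}$ that lets the recursion continue indefinitely while confining all finite sums to a single colour class.
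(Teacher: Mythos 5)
Your proposal is correct and is essentially the same argument as the paper's: the paper proves this theorem only by citation to Hindman and Strauss~\cite{hindmanstrauss}, and the proof given there is exactly this Galvin--Glazer argument (extend $+$ to $\beta G$, obtain a nonprincipal idempotent $p\in G^{*}$ via Ellis's theorem, pass to the colour class $A\in p$ and its set $A^{\star}$, and run the recursion keeping all finite sums inside $A^{\star}$). All the steps, including the crucial self-reproducing property that $(-g+A^{\star})\in p$ for every $g\in A^{\star}$, are handled correctly.
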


In order to appropriately deal with generalizations to this theorem, it is convenient to introduce some notation. Given an abelian group $G$, the symbol $G\rightarrow(\kappa)_\theta^{\fs}$ will denote the statement that for every colouring of the elements of $G$ with $\theta$ colours, there exists an $X\subseteq G$ with $|X|=\kappa$ such that $\fs(X)$ is monochromatic; and $G\rightarrow[\kappa]_\theta^{\fs}$ is the exact same statement except that rather than requiring $\fs(X)$ to be monochromatic, we just require that it avoids at least one colour (thus the ``square-bracket'' statement is weaker than the ``round-bracket'' one, and this situation is reversed when one considers the negations of these statements). Hence, Hindman's theorem just asserts that $G\rightarrow(\omega)_k^{\fs}$ holds for every infinite abelian group $G$ and every finite number $k<\omega$. There are at least three questions that naturally arise upon encountering this result. The first one is whether it is possible to increase the number of colours to an infinite number, and still obtain infinite monochromatic sets. In Section~\ref{laseccioncorrespondiente} we answer this question in the negative. The second question that might naturally arise, is whether one can, still with finitely many colours, obtain uncountable monochromatic subsets. In~\cite{FBuncountable}, the first author of this paper answered that question in the negative, by proving that for every infinite abelian group $G$, it is the case that $G\nrightarrow(\omega_1)_2^{\fs}$. Later work of the first author and Assaf Rinot~\cite{FBAstrongfail} made it evident that, in fact, some much stronger negative statements are true. For example, it is the case that every uncountable abelian group $G$ satisfies $G\nrightarrow[\omega_1]_\omega^{\fs}$, that is, there is a colouring of the group $G$ with $\omega$ colours such that for every uncountable $X\subseteq G$, not only does $\fs(X)$ fail to be monochromatic, but in fact it contains elements of all possible colours (we say that $\fs(X)$ is panchromatic). The question of whether the number of colours in this result can be increased from $\omega$ to $\omega_1$ turns out to be independent from the $\zfc$ axioms, with both alternatives $(\forall G)(G\nrightarrow[\omega_1]_{\omega_1}^\fs)$ and $\mathbb R\rightarrow[\omega_1]_{\omega_1}^\fs$ being consistent (the latter depending on a very mild large cardinal assumption). Furthermore, results from~\cite{FBAstrongfail} entail that for many cardinals $\kappa$ (for example, if $\kappa$ is the successor of a regular cardinal) it is in fact the case that every abelian group $G$ of cardinality $\kappa$ satisfies $G\nrightarrow[\kappa]_\kappa^{\fs}$; in fact, it is consistent to have $\mathbb R\nrightarrow[\mathfrak c]_{\mathfrak c}^{\fs}$, where $\mathbb R$ is the additive group of real numbers.

Surprisingly, there is a third question that arises naturally from consideration of Hindman's theorem, by increasing the number of colours and \textit{simultaneously} decreasing the size of the expected monochromatic set. That is, we can also consider the question of whether one can obtain an analog of Hindman's theorem, with infinitely many colours, where the monochromatic sets that we request are finite. Komj\'ath~\cite{komjath} initiated this line of research by proving that, for every finite $n$ and infinite $\kappa$, there exists a (sufficiently large) $\lambda$ such that, if $\mathbb B(\lambda)$ is the (unique up to isomorphism) Boolean group of cardinality $\lambda$, then $\mathbb B(\lambda)\rightarrow(n)_\kappa^{\fs}$. In fact, even more is true. Komj\'ath proved that for every $\kappa$, there is a (sufficiently large) $\lambda$ such that, whenever we colour the Boolean group $\mathbb B(\lambda)$ with $\kappa$ many colours, there is a $\kappa\times n$ matrix of elements of $\mathbb B(\lambda)$, $(x_{\alpha,i} \mid \alpha<\kappa\wedge i<n)$, such that the set
\begin{equation*}
\fsm(x_{\alpha,i} \mid \alpha<\kappa\wedge i<n)=\left\{x_{\alpha_1,i_1}+\cdots+x_{\alpha_k,i_k} \mid i_1<\cdots<i_k<n\wedge\alpha_1,\ldots,\alpha_k<\kappa\right\}
\end{equation*}
is monochromatic. From now on, we will abbreviate this statement by means of the symbol $\mathbb B(\lambda)\rightarrow(\kappa\times n)_\kappa^\fsm$ (and the analogous symbol when $\mathbb B(\lambda)$ is replaced by some other group). Note that, since $\fsm(x_{\alpha,i} \mid \alpha<\kappa\wedge i<n)$ contains $\fs(x_{\alpha,i} \mid i<n)$ for every fixed $\alpha$, the statement $G\rightarrow(\kappa\times n)_\kappa^\fsm$ is always stronger than the statement $G\rightarrow(n)_\kappa^\fs$. Along similar lines, Carlucci~\cite{carluccihindmantype} has proved results where Boolean groups are coloured with uncountably many colours, and uncountable sets $X$ are obtained where a carefully restricted subset of $\fs(X)$ (containing only those finite sums whose number of summands belongs to a set of some specific form) is monochromatic.

In this paper we generalize Komj\'ath's result above for $n=2$ by showing that, for every $\kappa$, there is a $\lambda$ such that every abelian group $G$ of cardinality $\lambda$ will satisfy $G\rightarrow(\kappa\times 2)_\kappa^\fsm$. Our $\lambda$ is just the cardinal $\left(2^\kappa\right)^+$, which is significantly smaller than the one used by Komj\'ath in his result. Moreover, we also show that this $\lambda$ is optimal, in the sense that $\B(2^\kappa)\nrightarrow(2)_\kappa^\fs$ (and \textit{a fortiori}, $\B(2^\kappa)\nrightarrow(\kappa\times 2)_\kappa^\fsm$). Extremely surprisingly, however, our attempt to further generalize these results for $n=3$ yields a negative result: we also show that there are arbitrarily large groups $G$ such that $G\nrightarrow(3)_\omega^\fs$. So Komj\'ath's result for monochromatic $\fs$-sets generated by three or more elements cannot be extended to all abelian groups of a given size.

\noindent{\bf Notation:} Most of the standard notational conventions of set theory are followed here. For example, every ordinal $\alpha$ is a von Neumann ordinal (and so, set-theoretically, $\alpha$ is just the set of all ordinals $\xi<\alpha$), and each cardinal is identified with the least ordinal of that cardinality. If $\kappa$ is an infinite cardinal, then $\kappa^+$ is its successor cardinal. Given a cardinal $\kappa$, we recursively define the beth sequence that starts at $\kappa$ by letting $\beth_0(\kappa) = \kappa$, $\beth_{\alpha + 1} = 2^{\beth_\alpha(\kappa)}$, and $\beth_\alpha(\kappa) = \sup\{\beth_\beta(\kappa) \mid  \beta < \alpha\}$ for limit $\alpha$. We use exponential notation for both the cardinal exponential function, and the set-theoretic exponential operation (taking a set of functions). Thus, an expression such as $\kappa^\lambda$ might denote either the set $\{f \mid f:\lambda\longrightarrow\kappa\}$, or the cardinality of that set. We are confident that the context will always be sufficient to determine which of the two meanings of $\kappa^\lambda$ should be inferred every time the symbol occurs, and that therefore there will be no confusion arising from this.
   
Given an infinite cardinal $\lambda$, we use the symbol $\B(\lambda)$ to denote the unique (up to isomorphism) Boolean group of cardinality $\lambda$ (recall that a Boolean group is one in which every element has order $2$), whose most friendly incarnation is $([\lambda]^{<\omega}, \vartriangle)$

Something must be said also about how the injectivity of divisible groups provides a structural theorem that yields much information about how abelian groups look like. The result (see e.g. \cite[p. 123]{summable-sparse} for a detailed explanation) is that, if $G$ is an arbitrary abelian group, then we can embed $G$ into a direct sum of the form $\bigoplus_{\alpha < \lambda} G_\alpha$, where each $G_\alpha$ is either equal to $\mathbb Q$, or to a Pr\"ufer group $\mathbb Z[p^\infty]$, for $p$ a prime number. Thus for every abelian group $G$, there is a pairwise disjoint sequence of indices $(I_p \mid p\in P)$, where $\mathbb P$ is the union of $\{0\}$ with the collection of prime numbers, such that $G$ embeds in $\bigoplus_{p\in\mathbb P}\left(\bigoplus_{\alpha\in I_p}\mathbb Z[p^\infty]\right)$, where we stipulate that $\mathbb Z[0^\infty]=\mathbb Q$ for notational convenience. In this context, for $p\in\mathbb P$ we will denote by $\pi_p$ the projection onto the indices from $I_p$, in other words, $\pi_p(x)=x\upharpoonright I_p\bigoplus_{\alpha\in I_p}\mathbb Z[p^\infty]$. A group that has special importance for us is the one-dimensional torus $\mathbb T=\mathbb R/\mathbb Z$. Typically we will identify an element of $\mathbb T$, which formally is the coset of a real number modulo $\mathbb Z$, with its unique representative belonging to $[0,1)$. With this identification, we can describe $\mathbb Z[p^\infty]=\left\{\frac{a}{p^k}\in\mathbb Q/\mathbb Z\subseteq\mathbb T \mid a\in[0,1)\wedge k\in\mathbb N\right\}$, for $p$ a prime number. Sometimes it is also convenient to think of $\mathbb Q$ as embedded in $\mathbb T$ (which can be easily done, e.g. via the mapping $q\longmapsto\sqrt{2} q+\mathbb Z$). Hence on occasion we will think of the groups $G_\alpha$ as countable subgroups of $\mathbb T$. Finally, for an element $x$ of any direct sum of groups, we will let $\supp(x)$ denote the (finite) set of indices $\beta$ such that $x(\beta)\neq 0$, and we will let $\sigma(x)$ denote the (finite) sequence of non-zero entries of $x$ (essentially, $\sigma(x)$ is just $x\upharpoonright\supp(x)$; formally it is the result of composing the inverse of the Mostowski collapse of $\supp(x)$ with the sequence $x\upharpoonright\supp(x)$).

\ 

\noindent{\bf Organization of this paper:} This paper contains 4 sections in addition to this Introduction. In Section 2, we generalize Komj\'ath's result, and improve his upper bound, for $n=2$. In Section 3 we proceed to prove that this same result cannot be generalized any further for $n\geq 3$. In Section 4, we show that the upper bound obtained in Section 2 is optimal. Finally, in Section 5 we prove some miscellaneous negative results along the same lines as the other results in the paper.

\section{Generalization of Komj\'ath's results for $n=2$}\label{sectkomjath2}

In this section, we will proceed to prove that for every $\kappa$ there is a $\lambda$ such that every abelian group $G$ of cardinality $\lambda$ satisfies $G\rightarrow(2)_\kappa^\fsm$.

\begin{definition}
If $G$ is a group and $(g_\alpha \mid \alpha<\eta)$ is a transfinite sequence of elements of $G$, we will say that the sequence is \textbf{independent} if for every $\alpha<\eta$ the element $g_\alpha$ does not belong to the subgroup of $G$ generated by $\{g_\xi \mid \xi<\alpha\}$.
\end{definition}

Note that any independent sequence $\langle g_\alpha \mid \alpha<\eta\rangle$ will satisfy that, whenever $\alpha_i<\beta_i$ ($i<2$) are such that $g_{\alpha_0} - g_{\beta_0} = g_{\alpha_1} - g_{\beta_1}$, then $\alpha_0 = \alpha_1$ and $\beta_0 = \beta_1$. We start with an easy lemma in this direction.

\begin{lemma}\label{lemma:indseq}
Let $G$ be an abelian group, and let $X\subseteq G$ be a subset of uncountable cardinality $\kappa$. Then there exists an independent sequence of length $\kappa$ whose range is contained in $X$.
\end{lemma}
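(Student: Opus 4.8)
The plan is to construct the desired sequence $(g_\alpha \mid \alpha < \kappa)$ by transfinite recursion, greedily choosing each new term to lie outside the subgroup generated by the previously chosen ones. The only thing that really needs to be verified is that this greedy choice never gets stuck; that is, that at every stage there remains an element of $X$ outside the relevant subgroup, and this will follow from a simple cardinality count.

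Concretely, suppose that for some $\alpha < \kappa$ we have already defined $(g_\xi \mid \xi < \alpha)$ with each $g_\xi \in X$, and let $H_\alpha$ be the subgroup of $G$ generated by $\{g_\xi \mid \xi < \alpha\}$. The key estimate is that $|H_\alpha| < \kappa$. Indeed, every element of $H_\alpha$ is a finite integer combination of the generators $g_\xi$ ($\xi < \alpha$), and there are at most $\max(|\alpha|, \aleph_0)$ such combinations, so $|H_\alpha| \le \max(|\alpha|, \aleph_0)$. Since $\kappa$ is a cardinal we have $|\alpha| < \kappa$ for every ordinal $\alpha < \kappa$, and since $\kappa$ is uncountable we also have $\aleph_0 < \kappa$; hence $\max(|\alpha|, \aleph_0) < \kappa$ and therefore $|H_\alpha| < \kappa$.

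Because $|X| = \kappa > |H_\alpha|$, the set $X \setminus H_\alpha$ has cardinality $\kappa$ and is in particular nonempty, so we may pick some $g_\alpha \in X \setminus H_\alpha$ and continue the recursion. This yields a sequence $(g_\alpha \mid \alpha < \kappa)$ of length $\kappa$ with range contained in $X$, and by construction $g_\alpha \notin H_\alpha = \langle g_\xi \mid \xi < \alpha\rangle$ for every $\alpha < \kappa$, which is exactly the assertion that the sequence is independent.

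As for where the work lies: there is no serious obstacle here, since the recursion is entirely routine once the cardinality bound is in hand. The one point worth emphasizing is the role of the uncountability hypothesis: it is precisely the inequality $\aleph_0 < \kappa$ that keeps the generated subgroups below $\kappa$ at every stage, including the finite and countable stages, where $|\alpha|$ alone is too small to absorb the contribution of the group operation. This is what guarantees that $X$ is never exhausted. Note also that no regularity assumption on $\kappa$ is needed, since $\alpha < \kappa$ already forces $|\alpha| < \kappa$ for the cardinal $\kappa$.
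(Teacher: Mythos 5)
Your proof is correct and follows essentially the same argument as the paper: a greedy transfinite recursion, justified by the observation that the subgroup generated by $\{g_\xi \mid \xi<\alpha\}$ has cardinality at most $\max\{|\alpha|,\aleph_0\}<\kappa$. Your write-up simply makes explicit the cardinality count and the role of uncountability that the paper's two-line proof leaves implicit.
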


\begin{proof}
Recursively pick $g_\alpha\in X$ that does not belong to the subgroup generated by $\{g_\xi \mid \xi<\alpha\}$. This can always be done, as long as $\alpha<\kappa$, because the latter subgroup has cardinality at most $\max\{|\alpha|,\omega\}<\kappa$.
\end{proof}

The previous lemma is false if we let $G$ be a countable group. For example, in $\mathbb Z$ it is easily checked that there are no infinite independent sequences (if $(x_n \mid n<\omega)$ were an infinite independent sequence, then the sequence $I_0\subseteq I_1\subseteq\cdots\subseteq I_n\subseteq\cdots$, where each $I_n$ is the subgroup generated by $\{x_i \mid i\leq n\}$, would be a strictly increasing sequence of ideals in the Noetherian ring $\mathbb Z$).

The following theorem generalizes Komj\'ath's result~\cite[Theorem 2]{komjath}, in the specific case that $n=2$.

\begin{theorem}\label{theorem:2hom}
   Let $\kappa$ be an infinite cardinal, and let $\lambda=(2^\kappa)^+$. Then for every abelian group $G$ of cardinality $\lambda$, we have that $G \rightarrow (\kappa\times 2)^\fsm_\kappa$.
\end{theorem}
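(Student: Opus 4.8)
The plan is to combine the Erd\H{o}s--Rado partition theorem with an algebraic ``pivot'' trick that forces all three relevant kinds of elements (the two sorts of single-column entries and the cross-sums) to be realized as differences of two terms of a single fixed sequence. First I would fix a colouring $c\colon G\to\kappa$ and, invoking Lemma~\ref{lemma:indseq} (applicable since $\lambda=(2^\kappa)^+$ is uncountable), an independent sequence $(g_\xi \mid \xi<\lambda)$ of elements of $G$. The point of passing to an independent sequence is exactly the remark made after the definition of independence: the differences $g_\eta-g_\xi$ are then pairwise distinct, non-zero elements of $G$, so the matrix I eventually build will have distinct entries, and I may safely run a partition argument on these differences.

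Second, I would define a colouring $d\colon[\lambda]^2\to\kappa$ of unordered pairs by setting $d(\{\xi,\eta\})=c(g_\eta-g_\xi)$ whenever $\xi<\eta$. Since $\lambda=(2^\kappa)^+=\beth_1(\kappa)^+$, the Erd\H{o}s--Rado theorem yields $(2^\kappa)^+\to(\kappa^+)^2_\kappa$, so there is a set $H\subseteq\lambda$ with $|H|=\kappa^+$ that is $d$-homogeneous with some constant value $c_*$; that is, $c(g_\eta-g_\xi)=c_*$ for all $\xi<\eta$ in $H$. Here it is crucial that $d$ uses exactly $\kappa$ colours and that the size $\kappa^+$ of the homogeneous set exceeds $\kappa$.

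The heart of the argument is the construction of the matrix from $H$. Because $|H|=\kappa^+$, I can choose a pivot $m\in H$ having at least $\kappa$ elements of $H$ strictly below it and at least $\kappa$ strictly above; fix enumerations $(b_\delta \mid \delta<\kappa)$ of $\kappa$ elements of $H$ below $m$ and $(a_\gamma \mid \gamma<\kappa)$ of $\kappa$ elements of $H$ above $m$. I then set
\[
x_{\gamma,0}=g_{a_\gamma}-g_m,\qquad x_{\gamma,1}=g_m-g_{b_\gamma}.
\]
The verification is immediate: $x_{\gamma,0}=g_{a_\gamma}-g_m$ with $m<a_\gamma$, $x_{\delta,1}=g_m-g_{b_\delta}$ with $b_\delta<m$, and $x_{\gamma,0}+x_{\delta,1}=g_{a_\gamma}-g_{b_\delta}$ with $b_\delta<a_\gamma$. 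Thus every element of $\fsm(x_{\gamma,i} \mid \gamma<\kappa\wedge i<2)$ has the form $g_\eta-g_\xi$ with $\xi<\eta$ both in $H$, and so receives colour $c_*$, proving that the set is monochromatic.

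I expect the genuinely clever part to be the choice of building blocks in the third step rather than any technical estimate. One has to notice that using \emph{signed differences through a common pivot} $m$ makes the two single-column elements and the cross-sum all collapse to ``positive differences inside $H$'', so that a single homogeneous colour covers all of them at once; the naive choice $x_{\gamma,i}=g_{(\cdot)}$ would instead leave the singletons and the pairwise sums with potentially distinct colours, and no single homogeneous set could reconcile them. Everything else is bookkeeping: the exact match between $\lambda=(2^\kappa)^+$ and the Erd\H{o}s--Rado bound for pairs is precisely what delivers a homogeneous set of the required size $\kappa^+>\kappa$.
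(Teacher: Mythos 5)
Your proposal is correct and is essentially the paper's own argument: an independent sequence from Lemma~\ref{lemma:indseq}, the Erd\H{o}s--Rado theorem applied to the pair-colouring $d(\{\xi,\eta\})=c(g_\eta-g_\xi)$, and a pivot element splitting the homogeneous set so that both columns of the matrix and all cross-sums collapse to differences $g_\eta-g_\xi$ with $\xi<\eta$ inside the homogeneous set. The only cosmetic differences are that the paper extracts the pivot $\beta$ and the two $\kappa$-blocks directly from the partition relation (noting that $\lambda\rightarrow(\kappa+\kappa)^2_\kappa$ already suffices, rather than passing through a full homogeneous set of size $\kappa^+$), and that its two columns are labelled in the opposite order to yours.
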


\begin{proof}
Suppose that $G$ is an abelian group with $ \mid G \mid =\lambda$, and choose an independent sequence $(g_\alpha \mid \alpha<\lambda)$ in $G$. Define $d:[\lambda]^2\longrightarrow\kappa$ by $d(\{\alpha<\beta\})=c(g_\beta-g_\alpha)$. By the Erd\H{o}s--Rado theorem, $\lambda\rightarrow(\kappa^+)_\kappa^2$ (in fact, for what we are about to do it suffices that $\lambda\rightarrow(\kappa+\kappa)_\kappa^2$), thus we can choose two increasing sequences of ordinals $(\alpha_\xi \mid \xi<\kappa)$ and $(\gamma_\eta \mid \eta<\kappa)$, as well as an ordinal $\beta$, satisfying
\begin{equation*}
\sup_{\xi<\kappa}\alpha_\xi<\beta<\gamma_0
\end{equation*}
and a colour $\delta$ such that $d``[\{\alpha_\xi \mid \xi<\kappa\}\cup\{\beta\}\cup\{\gamma_\eta \mid \eta<\kappa\}]^2=\{\delta\}$. For $\xi<\kappa$, define $x_{\xi,0}=g_\beta-g_{\alpha_\xi}$ and $x_{\xi,1}=g_{\gamma_\xi}-g_\beta$. Since the sequence $(g_\alpha \mid \alpha<\lambda)$ is independent, the entries of our matrix are pairwise distinct. Furthermore, for each choice of $\xi,\eta<\kappa$, we have that
\begin{eqnarray*}
c(x_{\xi,0}) & = & c(g_\beta-g_{\alpha_\xi})=d(\{\alpha_\xi,\beta\})=\delta, \\
c(x_{\xi,1}) & = & c(g_{\gamma_\eta}-g_\beta)=d(\{\beta,\gamma_\eta\})=\delta, \\
c(x_{\xi,0}+x_{\eta,1}) & = & c(g_\beta-g_{\alpha_\xi}+g_{\gamma_\eta}-g_\beta)=c(g_{\gamma_\eta}-g_{\alpha_\xi})=d(\{\alpha_\xi,\gamma_\eta\})=\delta,
\end{eqnarray*}
and this finishes the proof.
\end{proof}

Komj\'ath's result~\cite[Theorem 2]{komjath} establishes that, given $n$, letting $\lambda=\beth_{2^{n-1}}(\beth_{2^{n-1}-1}(\kappa)^+)^+$ yields that $\mathbb B(\lambda)\rightarrow(\kappa\times n)_\kappa^\fsm$. In particular, for $n=2$ he needs to consider groups of cardinality $\beth_1(\kappa^+)^+=(2^{\kappa^+})^+$. In Theorem~\ref{theorem:2hom} above, we were able to simultaneously lower this size to $(2^\kappa)^+$ and at the same time improve the result to arbitrary abelian groups, although only in the case $n=2$. In Section 4 we will show that the $(2^\kappa)^+$ in this result is optimal, in the sense that there are groups $G$ of cardinality $2^\kappa$ such that $G\nrightarrow(2)_\kappa^\fs$. In Section 3, we will show that the number $n=2$ is also optimal, in the sense that there are groups $G$ of arbitrarily large size such that $G\nrightarrow(3)_\omega^\fs$.

\section{Komj\'ath's result cannot be generalized for $n\geq 3$}

Given Komj\'ath's results~\cite{komjath}, as well as our Theorem~\ref{theorem:2hom} above, it would be desirable to generalize these results to $n\geq 3$. That is, we would like to obtain, given a $\kappa$ and an $n\geq 3$, a sufficiently large cardinal $\lambda$ such that all abelian groups $G$ of cardinality $\lambda$ satisfy $G\rightarrow(\kappa\times 3)_\kappa^\fsm$, or at least $G\rightarrow(3)_\kappa^\fs$. In this section we will see, however, that this is impossible. The first step towards the proof of such impossibility result is the following Lemma, whose proof was communicated to the first author by Imre Leader and independently also by Julian Sahasrabudhe in personal communications, and is reproduced here with their kind permission.

\begin{lemma}\label{leader}
There are no three distinct vectors $\vec x,\vec y,\vec z\in\mathbb R^n$ such that $\| \vec x\| = \| \vec y \| =\| \vec z \| =\| \vec x+\vec y\| =\| \vec x+\vec z \| =\| \vec y+\vec z\| =\| \vec x+\vec y+\vec z\| $.
\end{lemma}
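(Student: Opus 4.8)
The plan is to translate the seven norm equalities into statements about inner products, where the algebra becomes transparent. Let $t$ denote the common value of the squared norm, so that $\|\vec x\|^2 = \|\vec y\|^2 = \|\vec z\|^2 = t$, and suppose toward a contradiction that three such distinct vectors exist. Since every hypothesis is a statement about a norm, and norms are easiest to manipulate after squaring, I would work throughout with the standard bilinear form $\langle \cdot, \cdot \rangle$ on $\mathbb R^n$ and the elementary identity $\|\vec u + \vec v\|^2 = \|\vec u\|^2 + 2\langle \vec u, \vec v \rangle + \|\vec v\|^2$.

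First I would extract the three pairwise inner products. From $\|\vec x + \vec y\|^2 = t$ together with $\|\vec x\|^2 = \|\vec y\|^2 = t$, the identity above gives $2t + 2\langle \vec x, \vec y\rangle = t$, whence $\langle \vec x, \vec y\rangle = -t/2$. The same computation applied to the pairs $\{\vec x, \vec z\}$ and $\{\vec y, \vec z\}$ forces $\langle \vec x, \vec z\rangle = \langle \vec y, \vec z\rangle = -t/2$ as well. Thus all three pairwise inner products are pinned down to the single value $-t/2$; this is the crux of the argument, and everything that follows is bookkeeping.

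The second and decisive step is to expand the squared norm of the total sum. Using bilinearity,
\[
\|\vec x + \vec y + \vec z\|^2 = \|\vec x\|^2 + \|\vec y\|^2 + \|\vec z\|^2 + 2\bigl(\langle \vec x, \vec y\rangle + \langle \vec x, \vec z\rangle + \langle \vec y, \vec z\rangle\bigr) = 3t + 2\left(3 \cdot \left(-\tfrac{t}{2}\right)\right) = 0.
\]
On the other hand, the last of the seven hypotheses asserts $\|\vec x + \vec y + \vec z\|^2 = t$. Comparing the two values gives $t = 0$, so $\vec x = \vec y = \vec z = \vec 0$, contradicting the assumption that the three vectors are distinct (indeed, that any one of them is nonzero).

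I do not expect a genuine obstacle here; the only point worth flagging is how little the hypotheses actually tolerate. The argument uses nothing about the dimension $n$ and never invokes distinctness until the final line: the norm conditions alone already collapse the configuration to the origin, and it is precisely distinctness that makes this collapse contradictory. In fact, the same reasoning shows more — even if repetitions among $\vec x, \vec y, \vec z$ were permitted, the only triple satisfying all seven equalities would be the trivial one $\vec x = \vec y = \vec z = \vec 0$.
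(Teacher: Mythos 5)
Your proof is correct and follows essentially the same route as the paper's: square the norms, deduce that all pairwise inner products equal $-t/2$, and then expand $\|\vec x+\vec y+\vec z\|^2$ to force $t=0$. The only cosmetic difference is that the paper invokes distinctness at the outset (to note $r\neq 0$) while you invoke it at the end; the computations are identical.
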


\begin{proof}
Suppose, on the contrary, that $\vec x,\vec y,\vec z$ are three distinct vectors such that $r=\langle\vec x,\vec x\rangle=\langle\vec y,\vec y\rangle=\langle\vec z,\vec z\rangle=\langle\vec x+\vec y,\vec x+\vec y\rangle=\langle\vec y+\vec z,\vec y+\vec z\rangle=\langle\vec x+\vec z,\vec x+\vec z\rangle=\langle\vec x+\vec y+\vec z,\vec x+\vec y+\vec z\rangle$ (in particular, $r\neq 0$). Notice that
\begin{equation*}
r=\langle\vec x+\vec y,\vec x+\vec y\rangle=\langle\vec x,\vec x\rangle+2\langle\vec x,\vec y\rangle+\langle\vec y,\vec y\rangle=2\langle\vec x,\vec y\rangle+2r,
\end{equation*}
which implies that $\langle\vec x,\vec y\rangle=-\frac{1}{2}r$. A similar argument shows that $\langle\vec y,\vec z\rangle=\langle\vec x,\vec z\rangle=-\frac{1}{2}r$. Now
\begin{eqnarray*}
r & = & \langle\vec x+\vec y+\vec z,\vec x+\vec y+\vec z\rangle=\langle\vec x,\vec x\rangle+\langle\vec y,\vec y\rangle+\langle\vec z,\vec z\rangle+2\langle\vec x,\vec y\rangle+2\langle\vec y,\vec z\rangle+2\langle\vec x,\vec z\rangle \\
 & = & 3r+2\left(-\frac{3}{2}r\right)=0,
\end{eqnarray*}
which is a contradiction.
\end{proof}

Now we can explicitly state the theorem about the impossibility of obtaining further generalizations of Komj\'ath's result.

\begin{theorem}\label{impossiblefor3}
For each infinite cardinal $\lambda$, there is an abelian group $G$ of cardinality $\lambda$ such that $G\nrightarrow(3)_\omega^\fs$.
\end{theorem}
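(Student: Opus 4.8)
The plan is to exploit Lemma~\ref{leader} by colouring $G$ according to a suitable squared-norm function, the crucial point being to arrange that this squared norm takes only countably many values. To this end I would take $G$ to be the free abelian group of rank $\lambda$, namely $G=\bigoplus_{\alpha<\lambda}\Z$, which has cardinality $\lambda$ for infinite $\lambda$ (there are $\lambda$ finite subsets of $\lambda$, and each supports $\aleph_0$ many integer tuples, so $|G|=\lambda\cdot\aleph_0=\lambda$). Every $x\in G$ has finite support, so I may regard $G$ as sitting inside the real inner product space with orthonormal basis $(e_\alpha \mid \alpha<\lambda)$, where $\langle x,y\rangle=\sum_\alpha x(\alpha)y(\alpha)$ is a well-defined finite sum.

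The colouring I would use is $c\colon G\longrightarrow\omega$ defined by $c(x)=\|x\|^2=\sum_\alpha x(\alpha)^2$. Since each coordinate $x(\alpha)$ is an integer, $c(x)$ is a finite sum of squares of integers, hence a natural number; thus $c$ genuinely uses only countably many colours. This is the heart of the matter: had I embedded $G$ into a $\Q$- or $\R$-vector space instead, the squared norm would range over a set of size continuum, and Lemma~\ref{leader} would give no control on the number of colours. Working over $\Z$ is precisely what forces the number of colours to be countable.

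To check that $c$ witnesses $G\nrightarrow(3)_\omega^\fs$, take any three distinct elements $x,y,z\in G$. Their supports are finite, so all three lie in a common finite-dimensional subspace, which I may identify isometrically with $\R^n$ for some $n<\omega$. By Lemma~\ref{leader}, the seven quantities $\|x\|,\|y\|,\|z\|,\|x+y\|,\|x+z\|,\|y+z\|,\|x+y+z\|$ cannot all coincide; since norms are non-negative, the corresponding squared norms cannot all coincide either, so $c$ is not constant on $\fs(\{x,y,z\})=\{x,y,z,x+y,x+z,y+z,x+y+z\}$. Hence $\fs(\{x,y,z\})$ fails to be monochromatic, and as $x,y,z$ were arbitrary distinct elements, this establishes $G\nrightarrow(3)_\omega^\fs$.

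I do not expect a serious obstacle beyond the packaging just described; the only steps needing care are the observation that integrality of the coordinates pins the squared norm inside $\N$ (so that $c$ maps into $\omega$), together with the routine verification that $|G|=\lambda$. Everything else is a direct application of Lemma~\ref{leader} to the finite-dimensional subspace spanned by the supports of the three chosen vectors.
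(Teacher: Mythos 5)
Your proof is correct and is essentially the paper's own argument: the same group $\bigoplus_{\alpha<\lambda}\mathbb Z$, the same squared-norm colouring $c(x)=\sum_{\alpha}x(\alpha)^2$, and the same reduction to Lemma~\ref{leader} by restricting to the finite union of supports. One aside is inaccurate, though harmless: colouring $\bigoplus_{\alpha<\lambda}\mathbb Q$ by squared norm would \emph{also} use only countably many colours (a finite sum of squares of rationals is rational), and indeed the paper offers $\bigoplus_{\alpha<\lambda}\mathbb Q$ as an equally valid choice of $G$; only over $\mathbb R$ would the number of colours blow up.
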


\begin{proof}
Given $\lambda$, let $G$ be either $\bigoplus_{\alpha<\lambda}\mathbb Z$ (the free abelian group on $\lambda$ generators), or $\bigoplus_{\alpha<\lambda}\mathbb Q$. Each element $x\in G$ is a sequence of integers (or of rational numbers), almost all of which equal zero. Hence it makes sense to define $c(x)=\sum_{\alpha<\lambda}x(\alpha)^2$. This gives us a colouring $c:G\longrightarrow\mathbb Z$ on $\omega$ many colours. Suppose that there were three distinct elements $x,y,z$ such that $c(x)=c(y)=c(z)=c(x+y)=c(x+z)=c(y+z)=c(x+y+z)$. Let $I=\supp(x)\cup\supp(y)\cup\supp(z)$. By replacing $x,y,z$ with $x\upharpoonright I,y\upharpoonright I,z\upharpoonright I$ (that is, ignoring all entries not in $I$, which equal $0$ anyway), we can think of $x,y,z$ as vectors in $\mathbb Z^n\subseteq\mathbb R^n$ (or in $\mathbb Q^n\subseteq\mathbb R^n$), where $n= | I |$. Then we will have that $\| x \| =\| y \| =\|z\| =\| x+y\| = \| x+z\| = \|y+z\| =\|x+y+z\| $, contradicting Lemma~\ref{leader}.
\end{proof}

Suppose that we are given $\kappa$, $n$, and and an abelian group $G$. Let $\lambda=\beth_{2^{n-1}-1}(\kappa)^+$, as in Komj\'ath's~\cite[Theorem 1]{komjath} for $\kappa$ and $n$. Then there are two cases where we can arrive to conclusions:
\begin{enumerate}
\item If $G$ has at least $\lambda$ elements of order $2$, then we know that $G\rightarrow(n)_\kappa^\fs$ (since $\mathbb B(\lambda)$ embeds in $G$, and hence every colouring of $G$ induces a corresponding colouring in $\mathbb B(\lambda)$).
\item On the other hand, if $G$ has $\kappa$ or fewer elements of finite order, then we know that $G\nrightarrow(n)_\kappa^\fs$. This is because, by the injectivity of divisible groups (as explained in the Introduction), we can think of $G$ as a subgroup of $\bigoplus_{p\in\mathbb P}\left(\bigoplus_{\alpha\in I_p}\mathbb Z[p^\infty]\right)$. Now we define a colouring $c$ of $G$ as follows. Notice that elements of finite order are those $x\in G$ for which $\pi_0(x)=0$, so we just colour each of these with a different colour (we do have enough colours, because there are less than $\kappa$ many elements of finite order). Now colour every $x\in G$ of infinite order with colour $\left(\sum_{\alpha\in I_0}x(\alpha)^2,y\right)$, where $y$ is the unique element satisfying $\pi_p(y)=\pi_p(x)$ for every $p\in\mathbb P\setminus\{0\}$, and $\pi_0(y)=0$. The same argument as in the proof of Theorem~\ref{impossiblefor3} shows that $c$ cannot carry monochromatic sets of the form $\fs(x,y,z)$.
\end{enumerate}

Now, if neither of these two possibilities hold, then we do not have yet a way of deciding whether or not $G\rightarrow(n)_\kappa^\fs$. In order to provide some partial answer to this question, we introduce the following definition.

\begin{definition}\label{defpattern}
Let $n<\omega$, and let $m\geq2$. A sequence $(x_k \mid k<n)$ will be called an \textbf{n-adequate pattern modulo $m$} if, for some $l<\omega$, we have $(\forall k<n)(x_k\in(\mathbb Z/m\mathbb Z)^l)$, and $\sigma``[\fs(x_k \mid k<n)]$ is a singleton, where $\sigma(x)$ denotes the sequence of non-zero entries of $x\in(\mathbb Z/m\mathbb Z)^l$.
\end{definition}

The construction of an independent family by Komj\'ath in~\cite[Lemma 1]{komjath} provides us with $n$-adequate patterns modulo $2$, for every $n<\omega$ (his construction yields an $n$-adequate pattern living in $(\mathbb Z/2\mathbb Z)^{2^n}$). For any arbitrary $m$, the sequence \newline $((1,-1,0),(0,1,-1))$ in $(\mathbb Z/m\mathbb Z)^3$ provides us with a $2$-adequate pattern modulo any arbitrary $m$. On the other hand, Theorem~\ref{impossiblefor3} tells us that it is hopeless to look for $n$-adequate patterns ``modulo 0'' (i.e. in some $\mathbb Z^l$ rather than $(\mathbb Z/m\mathbb Z)^l$), for $n\geq 3$. The issue of whether an arbitrary group $G$ satisfies $G\rightarrow(n)_\kappa^\fs$ for various $n$ and $\kappa$, as per the discussion immediately before Definition~\ref{defpattern}, is essentially equivalent to the question of whether there are $n$-adequate patterns modulo $m$ for various $n,m\in\mathbb N$, so that the mapping $\sigma$ constitutes, in a sense, the ``canonical'' colouring for abelian groups (in the sense that finding monochromatic subsets for an arbitrary colouring is equivalent to finding monochromatic subsets for this particular colouring, i.e. patterns). The next two theorems make this precise (thus providing a ``canonization theorem'' of sorts).

\begin{theorem}
Suppose that there exists an $n$-adequate pattern modulo $m$. Then for every $\kappa$ there exists a $\lambda$ such that every abelian group $G$ containing at least $\lambda$ elements of order $m$ will satisfy $G\rightarrow(n)_\kappa^\fs$.
\end{theorem}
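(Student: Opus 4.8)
The plan is to treat the given $n$-adequate pattern modulo $m$ as a fixed template and superimpose it, in many different positions, onto a large free $\Z/m\Z$-submodule of $G$; a single Erd\H{o}s--Rado argument on $d$-element subsets (where $d$ is the common support size forced by adequacy) then locates one position at which the entire $\fs$-set is monochromatic.

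First I would fix an $n$-adequate pattern $(x_k \mid k<n)$ in $(\Z/m\Z)^l$, let $w$ be the unique value of $\sigma$ on $\fs(x_k \mid k<n)$, and put $d=\lh(w)$. Thus for every nonempty $F\subseteq n$ the element $x_F:=\sum_{k\in F}x_k$ satisfies $|\supp(x_F)|=d$, and the nonzero entries of $x_F$, read in increasing order of coordinate, are precisely $w(0),\dots,w(d-1)$. I would set $\Lambda=\beth_{d-1}(\kappa)^+$ and extract from $G$ a family $(e_\xi \mid \xi<\Lambda)$ of elements of order $m$ generating a free $\Z/m\Z$-submodule of rank $\Lambda$. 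For each increasing $d$-tuple $t=(t_0<\dots<t_{d-1})$ in $\Lambda$ define $z_t=\sum_{j<d}w(j)e_{t_j}$, and define an auxiliary colouring $c'\colon[\Lambda]^d\longrightarrow\kappa$ by $c'(t)=c(z_t)$, where $c$ is the given $\kappa$-colouring of $G$.

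The heart of the argument is a direct appeal to Erd\H{o}s--Rado: since $\Lambda=\beth_{d-1}(\kappa)^+\to(\kappa^+)^d_\kappa$, there is $H\subseteq\Lambda$ with $|H|=\kappa^+$ on which $c'$ is constant, say with value $\delta$. I would then pick any increasing $l$-tuple $(\xi_0<\dots<\xi_{l-1})$ from $H$ (possible since $\kappa^+\geq l$) and set $y_k=\sum_{i<l}x_k(i)e_{\xi_i}$ for $k<n$. As the $e_{\xi_i}$ are free of order $m$, the map $x\mapsto\sum_{i<l}x(i)e_{\xi_i}$ is an injective homomorphism $(\Z/m\Z)^l\to G$, so the $y_k$ are pairwise distinct (here I use that the pattern may be assumed to have pairwise distinct entries, as otherwise it could not yield $n$ distinct generators). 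Finally, for any nonempty $F\subseteq n$, writing $\supp(x_F)=\{i_0<\dots<i_{d-1}\}$, adequacy gives $\sum_{k\in F}y_k=\sum_{j<d}w(j)e_{\xi_{i_j}}=z_t$ for $t=(\xi_{i_0},\dots,\xi_{i_{d-1}})\in[H]^d$, whence $c(\sum_{k\in F}y_k)=c'(t)=\delta$. Thus $\fs(y_0,\dots,y_{n-1})$ is monochromatic, as required.

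The step I expect to be the main obstacle is the extraction of a free $\Z/m\Z$-submodule of rank $\Lambda$, and with it the determination of $\lambda$. When $m$ is prime the elements of order $m$ together with $0$ form an $\F_m$-vector space whose dimension equals its cardinality, so the hypothesis that $G$ has at least $\lambda=\Lambda$ elements of order $m$ already yields a free family of rank $\Lambda$; in particular for $m=2$ this is exactly the Boolean setting of Komj\'ath. For composite $m$, however, the notion of independence from Lemma~\ref{lemma:indseq} is too weak to guarantee freeness: for instance $\Z/4\Z\oplus\B(\mu)$ admits a length-$\mu$ independent sequence of elements of order $4$ and has $\mu$ such elements, yet its free $\Z/4\Z$-rank is only $1$. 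Securing genuine freeness of large rank therefore requires the structure theory recalled in the Introduction to isolate a free $\Z/m\Z$-summand of rank $\Lambda$, and it is at this point that the precise dependence of $\lambda$ on $\kappa$, $m$, and the pattern must be pinned down; once such a free family is in hand, the combinatorial core above applies unchanged.
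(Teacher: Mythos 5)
Your combinatorial core is exactly the paper's proof: the paper likewise defines an auxiliary colouring on subsets of size equal to the length of the constant $\sigma$-value (its $l$ is your $d$, its $L$ is your $l$), takes $\lambda=\beth_{l-1}(\kappa)^+$, applies Erd\H{o}s--Rado, superimposes the pattern on a homogeneous set, and reads off the colour of every finite sum from homogeneity (it settles for a homogeneous set of the finite size $L$ rather than $\kappa^+$, which is immaterial). The divergence is in how the generators are obtained: the paper uses an \emph{independent} sequence of elements of order $m$ in the weak sense of Lemma~\ref{lemma:indseq}, while you insist on a genuinely free $\Z/m\Z$-family, and this is where your proposal has a real gap. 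For prime $m$ you are fine and in agreement with the paper: independence and $\F_p$-freeness coincide (the leading coefficient of a shortest offending relation is a unit, so one generator lies in the span of its predecessors), your vector-space extraction works, and you get the same bound $\beth_{d-1}(\kappa)^+$. But for composite $m$ the step you defer --- ``isolate a free $\Z/m\Z$-summand of rank $\Lambda$'' from the hypothesis that $G$ has many elements of order $m$ --- is not merely unfinished, it is impossible, as your own example proves: $\Z/4\Z\oplus\B(\mu)$ has $\mu$ elements of order $4$ and free $\Z/4\Z$-rank $1$, so no choice of $\lambda$ depending on $\kappa$, $m$ and the pattern can force a free family of rank $\Lambda$, or even of rank $2$. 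Since the theorem's hypothesis mentions only the number of order-$m$ elements, the route through freeness cannot be completed for composite $m$.

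What you missed is that freeness is not needed for the monochromaticity computation: the sum $\sum_{k\in F}y_k$ literally \emph{equals} the element $z_t=w(0)e_{\xi_{i_0}}+\cdots+w(d-1)e_{\xi_{i_{d-1}}}$, so $c\left(\sum_{k\in F}y_k\right)=c'(t)=\delta$ by homogeneity whether or not the superposition map $(\Z/m\Z)^l\to G$ is injective; this is exactly how the paper gets away with mere independence for arbitrary $m$. Injectivity is needed only to guarantee that $y_0,\ldots,y_{n-1}$ are pairwise distinct, i.e.\ that the witnessing set really has $n$ elements --- and here your instinct has located a genuine subtlety that the paper itself glosses over, since it never checks distinctness, and under mere independence distinctness can fail for composite $m$: the sequence $((2,2,0),(0,2,2))$ is a $2$-adequate pattern modulo $4$, and running the paper's construction in $\Z/4\Z\oplus\B(\mu)$ with the legitimate independent sequence $g_\xi=(1,e_\xi)$ yields $y_0=y_1=0$. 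So both your argument and the paper's are complete exactly when $m$ is prime; the composite case requires an additional reduction to prime modulus (in the spirit of Lemma~\ref{lemma:lambdaembed}, together with converting a pattern modulo $m$ into one modulo a suitable prime divisor of $m$), which neither your proposal nor the paper's proof supplies.
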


\begin{proof}
Let $(x_1,\ldots,x_n)$ be an $n$-adequate pattern modulo $m$. Suppose that the $x_i$ belong to $\bigoplus_{i<L}(\mathbb Z/m\mathbb Z)$, and suppose that $\sigma(x_i)=(k_1,\ldots,k_l)$ has length $l$. Let $\lambda=\beth_{l-1}(\kappa)^+$, so that by Erd\H{o}s--Rado we have $\lambda\rightarrow(L)_\kappa^l$. Take an arbitrary abelian group $G$ with $\lambda$ elements of order $m$, and suppose that $c:G\longrightarrow\kappa$. Using Lemma~\ref{lemma:indseq} (with $X$ the set of elements of order $m$), pick an independent sequence $(g_\xi \mid \xi<\lambda)$ consisting of elements of $G$ of order $m$. Define a colouring $d:[\lambda]^l\longrightarrow\kappa$ by stipulating that $d(\{\xi_1,\ldots,\xi_l\})=c(k_1g_{\xi_1}+\cdots+k_lg_{\xi_l})$, whenever $\xi_1<\cdots<\xi_l<\lambda$. By our choice of $\lambda$, there will be ordinals $\beta_1,\ldots,\beta_L$ such that $d``[\{\beta_1,\ldots,\beta_L\}]^l=\{\delta\}$. Now define $y_i=\sum_{j<L}x_i(j) g_{\beta_j}$ for $i\in\{1,\ldots,n\}$. We claim that, for every $y\in\fs(\{y_1, \dots, y_n\})$, it is the case that $c(y)=\delta$. Since each $g_\beta$ has order $m$, we have that $kg_\beta=k'g_\beta$ if $k\equiv k'\mod m$, and in particular if $m\mid k$ then $kg_\beta=0$. So if $y=y_{i_1}+\cdots+y_{i_t}$, letting $x=x_{i_1}+\cdots+x_{i_t}$ we get that 
\begin{eqnarray*}
y & = & \sum_{j<L}x_{i_1}(j)g_{\beta_j}+\cdots+\sum_{j<L}x_{i_t}(j)g_{\beta_j}=\sum_{j<L}(x_{i_1}(j)+\cdots+x_{i_t}(j))g_{\beta_j} \\ 
 & = & \sum_{j<L}x(j)g_{\beta_j}=\sum_{j\in\supp(x)}x(j)g_{\beta_j}=k_1g_{\xi_1}+\cdots+k_lg_{\xi_l},
\end{eqnarray*}
where $\supp(x)=\{\xi_1,\ldots,\xi_l\}\in[\{\beta_1,\ldots,\beta_L\}]^l$ (this works because $\sigma(x)=(k_1,\ldots,k_l)$ by assumption). Hence 
\begin{equation*}
c(y)=c(k_1g_{\xi_1}+\cdots+k_lg_{\xi_l})=d(\{\xi_1,\ldots,\xi_l\})=\delta,
\end{equation*}
and we are done.
\end{proof}

\begin{theorem}
Suppose that $n,m\in\mathbb N$ have the property that there is a $\lambda$ such that every abelian group $G$ with at least $\lambda$ elements of order $m$ satisfies $G\rightarrow(n)_\omega^\fs$. Then there is an $n$-adequate pattern modulo $m$.
\end{theorem}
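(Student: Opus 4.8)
The plan is to exploit the fact, already hinted at in the discussion preceding Definition~\ref{defpattern}, that $\sigma$ is the \emph{canonical} colouring: a set of the form $\fs(X)$ that is monochromatic for $\sigma$ is, essentially by definition, an $n$-adequate pattern. So the idea is to feed the hypothesis a group built out of copies of $\mathbb Z/m\mathbb Z$, colour it by $\sigma$, and read the pattern off the monochromatic set that the hypothesis guarantees.

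Concretely, let $\lambda$ be as in the hypothesis, fix an infinite cardinal $\mu\geq\lambda$, and set $G=\bigoplus_{\alpha<\mu}\mathbb Z/m\mathbb Z$. Since the $\mu$ standard generators all have order $m$, the group $G$ has at least $\lambda$ elements of order $m$, so the hypothesis applies to it. I would then colour $G$ by $c=\sigma$, sending each $x\in G$ to its (finite) sequence of non-zero entries. As every element of $G$ has finite support and $(\mathbb Z/m\mathbb Z)\setminus\{0\}$ is finite, the range of $\sigma$ is a countable set of finite sequences; thus $c$ is a colouring with $\omega$ colours. By the hypothesis, $G\rightarrow(n)_\omega^\fs$, so there is an $X=\{y_1,\ldots,y_n\}\subseteq G$ with $|X|=n$ such that $\fs(X)$ is $\sigma$-monochromatic, say with common value $s$.

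It then remains to convert this monochromatic set into an honest pattern living in some $(\mathbb Z/m\mathbb Z)^l$. Let $I=\bigcup_{k=1}^n\supp(y_k)$, a finite set, and let $l=|I|$. Composing with the (order-preserving) Mostowski collapse of $I$, I would reindex each $y_k$ as a vector $x_k\in(\mathbb Z/m\mathbb Z)^l$. The key observation is that every element $\sum_{k\in F}y_k$ of $\fs(X)$ has support contained in $I$, so discarding the coordinates outside $I$ alters neither the element's sequence of non-zero entries nor, therefore, its $\sigma$-value; hence $\sigma``[\fs(x_k \mid k<n)]=\{s\}$ remains a singleton. Moreover, since restriction to $I$ is injective on $X$, the $x_k$ stay pairwise distinct. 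Therefore $(x_k \mid k<n)$ is an $n$-adequate pattern modulo $m$, as required.

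The argument is almost entirely bookkeeping once the right colouring is identified; the only points demanding a little care are checking that $\sigma$ really uses only $\omega$ colours (which hinges on $m$ being finite) and that passing to the finite index set $I$ preserves both monochromaticity and the distinctness of the summands. I do not anticipate a genuine obstacle beyond these routine verifications, since the correspondence between $\sigma$-monochromatic $\fs$-sets and adequate patterns is exact.
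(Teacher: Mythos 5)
Your proposal is correct and follows essentially the same route as the paper: take $G=\bigoplus_{\alpha<\lambda}(\mathbb Z/m\mathbb Z)$, colour it by $\sigma$ (countably many colours since supports are finite and $m$ is finite), and read off the pattern from the monochromatic $\fs$-set. The paper compresses your final reindexing step into the single word ``immediately,'' so your write-up simply makes explicit the bookkeeping (restriction to the union of supports and the Mostowski collapse) that the paper leaves to the reader.
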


\begin{proof}
Consider $G=\bigoplus_{\alpha<\lambda}(\mathbb Z/m\mathbb Z)$. The mapping $\sigma:G\rightarrow(\mathbb Z/m\mathbb Z)^{<\omega}$ that sends each $x$ to its sequence of non-zero entries colours $G$ with countably many colours. Clearly a sequence $(x_i \mid i<n)$ with $\fs(x_i \mid i<n)$ monochromatic will immediately yield an $n$-adequate pattern modulo $m$.
\end{proof}

Thus, a complete description of the groups $G$ for which $G\rightarrow(n)_\kappa^\fs$ is equivalent to an answer to the question of the existence of $n$-adequate patterns modulo $m$ for various $m$. The Chinese Remainder Theorem suggests that we might only need to search for these patterns if $m$ is the power of a prime number. The lemma below shows that, in fact, it suffices to consider the situation where $m$ is a prime number.

\begin{lemma}\label{lemma:lambdaembed}
   Let $\lambda$ be an uncountable regular cardinal, and let $G$ be an abelian group with $\lambda$ many elements of finite order. Then there is a prime number $p$, and a subgroup $H\subseteq G$, with $ |H| =\lambda$, such that all elements of $H$ have order $p$.
\end{lemma}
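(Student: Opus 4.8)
The plan is to isolate a single primary component of the torsion subgroup that already carries $\lambda$ many elements, and then to manufacture the desired subgroup by multiplying through by a suitable power of the relevant prime. First I would replace $G$ by its torsion subgroup $G_{\tor}$, which by hypothesis has cardinality $\lambda$, and invoke its primary decomposition $G_{\tor}=\bigoplus_p G[p^\infty]$, where $G[p^\infty]=\{x\in G \mid (\exists k)(p^k x=0)\}$ is the $p$-primary component and $p$ ranges over the (countably many) primes. Since $\lambda$ is regular and uncountable we have $\cf(\lambda)>\omega$, so $\lambda$ cannot be written as a sum of countably many strictly smaller cardinals; consequently some prime $p$ must satisfy $|G[p^\infty]|=\lambda$. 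I fix such a $p$ for the remainder of the argument.

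Next I would look inside $G[p^\infty]$ at the increasing filtration $G[p]\subseteq G[p^2]\subseteq\cdots$, where $G[p^k]=\{x\in G \mid p^k x=0\}$, whose union is all of $G[p^\infty]$. Once more using $\cf(\lambda)>\omega$, this countable increasing union cannot attain cardinality $\lambda$ unless one of its terms already does, so I may choose $n\geq 1$ least with $|G[p^n]|=\lambda$; minimality then guarantees $|G[p^{n-1}]|<\lambda$ (with the convention $G[p^0]=\{0\}$).

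The crux is to take $H=p^{n-1}G[p^n]=\{p^{n-1}x \mid x\in G[p^n]\}$. Every $y\in H$ satisfies $py=0$, so $H$ is a subgroup in which each nonzero element has order exactly $p$, as desired. To see that $|H|=\lambda$, note that $x\mapsto p^{n-1}x$ is a surjective homomorphism from $G[p^n]$ onto $H$ whose kernel is precisely $G[p^{n-1}]$; hence $\lambda=|G[p^n]|=|H|\cdot|G[p^{n-1}]|$ as cardinals, and since $|G[p^{n-1}]|<\lambda$ this forces $|H|=\lambda$.

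The part I expect to require genuine care, rather than routine computation, is the twofold appeal to $\cf(\lambda)>\omega$: once to concentrate $\lambda$ torsion elements in a single prime, and once to capture $\lambda$ of them at a \emph{finite} exponent $p^n$. This is exactly where the hypothesis that $\lambda$ be uncountable and regular is used, and it cannot be dispensed with: the Pr\"ufer group $\Z[p^\infty]$, whose subgroups $G[p^k]$ are all finite while the whole group is countably infinite, shows that when $\cf(\lambda)=\omega$ one cannot in general force the exponent down to a finite value, so no such $H$ need exist.
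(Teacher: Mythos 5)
Your proof is correct, and it takes a genuinely different route from the paper's. You argue purely inside abstract abelian group theory: primary decomposition of the torsion subgroup, the filtration $G[p]\subseteq G[p^2]\subseteq\cdots$ of the $p$-primary component, and the multiplication-by-$p^{n-1}$ homomorphism, with uncountable cofinality invoked twice (once to concentrate $\lambda$ elements at a single prime, once at a single finite exponent) and a kernel--image cardinality count to finish; all of these steps are sound. The paper instead works with the concrete representation described in its Introduction: it embeds $G$ into $\bigoplus_{\alpha<\lambda}G_\alpha$ with each $G_\alpha$ a countable subgroup of $\mathbb T$, fixes an order $m$ realized by $\lambda$ many elements, applies the $\Delta$-system lemma to their supports, makes the restrictions to the root identical by pigeonhole, kills the root by summing over $\lambda$ disjoint blocks of $m$ elements each (using that every root entry $v$ satisfies $mv=0$), and finally multiplies by $k$, where $m=pk$ with $p$ prime, obtaining $\lambda$ many elements of order $p$ with pairwise disjoint supports, which generate the desired $H$. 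The comparison cuts both ways: your argument is more elementary (no embedding into divisible groups, no $\Delta$-system lemma) and in fact proves a slightly stronger statement, since every use of regularity in it needs only $\cf(\lambda)>\omega$, so the lemma holds for singular cardinals of uncountable cofinality as well, whereas the $\Delta$-system lemma in the paper's proof genuinely requires $\lambda$ regular; the paper's proof, on the other hand, yields generators of $H$ with pairwise disjoint supports---a strong form of independence---though this extra structure is neither recorded in the statement nor needed for its application (the reduction, in the discussion of $n$-adequate patterns, from arbitrary moduli to prime moduli). One small caveat about your closing remark: your own argument shows that regularity per se is not the operative hypothesis, only uncountable cofinality, and your Pr\"ufer-group example witnesses failure only at $\lambda=\omega$ (or more generally at countable cofinality after modification); for uncountable $\lambda$ with $\cf(\lambda)=\omega$ a suitable counterexample is $\bigoplus_{n<\omega}\bigoplus_{\alpha<\aleph_n}\Z/p_n\Z$ for $\lambda=\aleph_\omega$, where $p_n$ is the $n$-th prime, since any subgroup of prime exponent lies in a single primary component, all of which have size less than $\aleph_\omega$.
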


\begin{proof}
Using again the injectivity of divisible groups, as described in the Introduction, we know we can embed $G$ into $\bigoplus_{\alpha < \lambda} G_\alpha$ where each $G_\alpha$ is a countable subgroup of $\mathbb T=\mathbb R/\mathbb Z$. Considering all elements of order $m$ of $G$, by the $\Delta$-system lemma we can pick $\lambda$ many of them, say $\{x_\alpha \mid \alpha<\lambda\}$, whose supports form a $\Delta$-system (this utilizes the fact that for every given support, there are at most countably many elements of $G$ with that given support, which follows from the fact that each $G_\alpha$ is countable), let us denote by $r$ the root of this $\Delta$-system. By the pigeonhole principle, we may assume that all of the $x_\alpha\upharpoonright r$ are equal. Since each $x_\alpha$ has order $m$, then for each $\xi\in r$ we have that $mx_\alpha(\xi)=0$. Thus if we partition $\lambda$ into $\lambda$ many pairwise disjoint subsets of size $m$, $\{F_\alpha \mid \alpha<\lambda\}$, and we define $y_\alpha=\sum_{\beta\in F_\alpha}x_\beta$, we will have that $y_\alpha\upharpoonright r$ is identically zero, $y_\alpha$ will still have order $m$, and the supports of the $y_\alpha$ will be pairwise disjoint. If $p$ is a prime number dividing $m$, with $m=pk$, then it is not hard to check now that the sequence $(k y_\alpha \mid \alpha<\lambda)$ consists of $\lambda$ many distinct elements, all of which have order $p$.
\end{proof}

We close this section with a couple of final notes. As we pointed out above, $n$-adequate patterns modulo $2$ are known to exist for all $n$, and so are $2$-adequate patterns modulo $m$ for all natural numbers $m$. After almost a year of not knowing anything else, the first author was recently made aware (via a personal communication from Andy Zucker) that Christopher Cox recently found, aided by a computer, an example of a $3$-adequate pattern modulo $3$. Whether or not there are $n$-adequate patterns modulo $m$ for $n\neq 2\neq m$ and $(n,m)\neq(3,3)$ is still wide open (as mentioned above, in order to completely characterize abelian groups satisfying finitary Hindman-like theorems, it suffices to determine the existence of $n$-adequate patterns modulo prime numbers $p$).

The idea of using the mapping $\sigma$ (mapping an element of a direct sum to the sequence of its nonzero entries) as a canonical colouring, in order to prove algebraic Ramsey-theoretic results on large abelian groups, was also recently used by Leader and Russell~\cite{leader-russell} to prove that every abelian group $G$ of cardinality at least $\beth_\omega$ without elements of order $4$ has the property that for every finite colouring $c:G\longrightarrow k$, there exists an infinite $X\subseteq G$ such that $X+X$ is monochromatic\footnote{Leader and Russell's result is not stated in as general a form, but their proof easily yields the more general version stated here.}. This is also one of the central ideas in the recent proof of Komj\'ath, Leader, Russel, Shelah, D. Soukup and Vidny\'anszky~\cite{6-authors} that it is consistent, modulo some large cardinals, that the real numbers have the same property (namely, for every $c:\mathbb R\longrightarrow k<\omega$ there exists an infinite $X\subseteq\mathbb R$ such that $X+X$ is monochromatic).

\section{A lower bound}

In this section, we will show that the upper bounds from Section~\ref{sectkomjath2} are optimal. Recall that, in Theorem~\ref{theorem:2hom}, it was established that for every infinite $\kappa$, every abelian group $G$ of cardinality $(2^\kappa)^+$ satisfies $G\rightarrow(2)_\kappa^\fs$. The theorem below establishes that the number $(2^\kappa)^+$ in this result is the best possible.

\begin{theorem}\label{lowerbound}
   For every infinite $\kappa$, $\B(2^\kappa) \not\rightarrow (2)_\kappa^\fs$.
\end{theorem}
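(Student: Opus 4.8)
The plan is to exhibit an explicit colouring of $\B(2^\kappa)$ with $\kappa$ colours that admits no monochromatic $\fs$-set of size two. First I would reformulate the target. A monochromatic $\fs(\{v,w\})$ for distinct nonzero $v,w$ is exactly a triple $\{v,w,v+w\}$ of distinct nonzero elements all receiving the same colour; since we work in a Boolean group, such a triple together with $0$ is precisely a two-dimensional $\F_2$-subspace. Thus $\B(2^\kappa)\nrightarrow(2)_\kappa^\fs$ amounts to $\kappa$-colouring the nonzero vectors of an $\F_2$-vector space of dimension $2^\kappa$ so that no two-dimensional subspace (equivalently, no ``$\F_2$-line'' $\{v,w,v+w\}$) is monochromatic.

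The crucial move is to choose a convenient incarnation of the group. Rather than the paper's default $([2^\kappa]^{<\omega},\triangle)$, I would use the fact that $\B(2^\kappa)$, being the unique Boolean group of cardinality $2^\kappa$, is isomorphic to $(\mathcal{P}(\kappa),\triangle)$, the group of \emph{all} subsets of $\kappa$ under symmetric difference (equivalently $\F_2^\kappa$ with pointwise addition). This group has cardinality $2^\kappa$, yet the support of each of its elements lives inside the small index set $\kappa$. I would then define, for nonzero $v$, the colour $c(v)=\min\supp(v)\in\kappa$, so that only $\kappa$ colours are used; this is the entire point of switching incarnations, since $\min$ of a support sitting inside $2^\kappa$ would use far too many colours.

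To verify that $c$ has no monochromatic line, I would record how $\min\supp$ behaves under symmetric difference. For distinct nonzero $v,w$, writing $a=\min\supp(v)$ and $b=\min\supp(w)$: if $a\neq b$, say $a<b$, then $a\notin\supp(w)$, so $a$ survives in $v\triangle w$ and $\min\supp(v+w)=a$; while if $a=b$ then $a$ cancels and $\min\supp(v+w)>a$. In every line $\{v,w,v+w\}$ this forces exactly two of the three elements to share the least min-value and the third to have a strictly larger one, so the three colours are never all equal. This short computation is the whole content of the argument once the representation is fixed, so I expect the ``main obstacle'' to be conceptual rather than technical: namely, recognizing that the group should be presented with an index set of size $\kappa$ (using infinite subsets) rather than of size $2^\kappa$ (using finite subsets).

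Finally I would dispose of the degenerate case where the two-element set $X$ contains $0$: then $\fs(X)=\{0,w\}$, which is monochromatic iff $c(0)=c(w)$. To rule this out I would reserve one colour for $0$, e.g. set $c(0)=0$ and $c(v)=\min\supp(v)+1$ for $v\neq 0$; since $\kappa$ is infinite this still uses only $\kappa$ colours, the reindexing does not affect the line computation above, and $c(0)$ differs from every $c(w)$ with $w\neq 0$. This completes the construction witnessing $\B(2^\kappa)\nrightarrow(2)_\kappa^\fs$.
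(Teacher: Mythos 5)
Your proof is correct, and it takes a genuinely different and substantially shorter route than the paper's. The paper works with the incarnation $([2^\kappa]^{<\omega},\triangle)$, views $2^\kappa$ as the branches of the binary tree of height $\kappa$, colours a finite set $x=\{x_0,\ldots,x_{n-1}\}$ by the full symmetric matrix of splitting levels $\bigl(\Delta(x_i,x_j)\bigr)_{i,j<n}$, and then runs a minimal-counterexample argument: from a purported $x,y$ with $c(x)=c(y)=c(x\triangle y)$ of least size it extracts a strictly smaller counterexample, a contradiction. Your key move---invoking the uniqueness of the Boolean group of cardinality $2^\kappa$ to replace $[2^\kappa]^{<\omega}$ by $(\mathcal{P}(\kappa),\triangle)\cong\F_2^{\kappa}$, so that supports live in the well-ordered index set $\kappa$---is sound (an infinite $\F_2$-vector space has dimension equal to its cardinality, so $\mathcal{P}(\kappa)$ indeed has dimension $2^\kappa$, and the arrow relation is isomorphism-invariant), and it reduces everything to the valuation-like behaviour of $v\mapsto\min\supp(v)$: the minimum survives in $v+w$ when the two minima differ and strictly increases when they agree, so no line $\{v,w,v+w\}$ can be monochromatic; your separate treatment of sets containing $0$ is also the right precaution, since the definition of the arrow does not exclude $0$ from $X$. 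It is worth noting the two arguments are secretly related---the paper's $\Delta(f,g)$ is exactly $\min\supp(f+g)$ when points of $2^\kappa$ are read as vectors of $\F_2^{\kappa}$---but the paper applies this function to pairs of index points and must then track an entire matrix of values and induct on size, whereas your change of incarnation applies it directly to group elements, collapsing all of that bookkeeping into a two-case computation. The trade-off is minor: the paper's tree-matrix colouring is closer in spirit to classical Sierpi\'nski-type negative partition relations and to the techniques used elsewhere in that literature, but for this particular theorem your argument is cleaner and loses no generality.
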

\begin{proof}
Let $\kappa$ be an infinite cardinal. We think of the group $\B(2^\kappa)$ as consisting of the set $[2^\kappa]^{<\omega}$ of finite subsets of $2^\kappa$, equipped with symmetric difference $\bigtriangleup$ as its group operation. We also think of $2^\kappa$ as the set of branches through the full binary tree of height $\kappa$. Hence $2^\kappa$ is equipped with a natural linear order, namely the lexicographic one, which we will denote simply by $<$. Given two distinct elements $f,g\in 2^\kappa$, we will denote by $\Delta(f,g)=\min\{\alpha<\kappa \mid f(\alpha)\neq g(\alpha)\}$. Hence $f<g$ iff $f(\Delta(f,g))<g(\Delta(f,g))$. As a useful convention, we will stipulate that $\Delta(f,f)=\kappa$.

We define a colouring $c$ of $\B(2^{\kappa})=[2^{\kappa}]^{<\omega}$ as follows: Given an $x\in[2^{\kappa}]^{<\omega}$, suppose that $x=\{x_0,\ldots,x_{n-1}\}$, where $x_0<x_1<\cdots<x_{n-1}$. We let $c(x)$ be the double-indexed finite sequence $(\Delta(x_i,x_j) \mid i,j<n)$. Thus we can think of $c(x)$ as a finite (square) symmetric matrix whose diagonal entries all equal $\kappa$, and with ordinals less than $\kappa$ off the diagonal. Clearly there are $\left | [\kappa]^{<\omega}\right | =\kappa$ many such matrices, thus $c$ colours $\B(2^{\kappa})$ with $\kappa$ many colours. We will now argue that it is not possible to have $x,y\in[2^\kappa]^{<\omega}$ such that $c(x)=c(y)=c(x \bigtriangleup y)$.

So suppose that $x,y\in[2^\kappa]^{<\omega}$ are two elements satisfying $c(x)=c(y)=c(x\bigtriangleup y)$, and furthermore suppose that these elements were chosen with $n= | x | = | y | = | x\bigtriangleup y |$ smallest possible. Let $\alpha=\min(\ran(c(x)))$, the least ordinal that occurs as an entry of the matrix $c(x)$. We let $x=\{x_0,\ldots,x_{n-1}\}$, $y=\{y_0,\ldots,y_{n-1}\}$ and $x\bigtriangleup y=\{z_0,\ldots,z_{n-1}\}$, all three ordered lexicographically. Suppose that $i<j$ are such that $\alpha$ is the $(i,j)$-th entry of $c(x)$, that is, $\alpha=\Delta(x_i,x_j)$. Let $f=x_i\upharpoonright\alpha$. Since $\alpha$ is the smallest of all the $\Delta(x_i,x_k)$, it follows that $x_k\upharpoonright\alpha=f$ for all $k<n$. Similarly, $y_k\upharpoonright\alpha=f$ for all $k<n$, and of course $z_k\upharpoonright\alpha=f$ for all $k<n$. We will also have that $\Delta(y_i,y_j)=\alpha$; and moreover will have that $x_i(\alpha)=0$, and $y_i(\alpha)=0$. Now for each $k\neq i$, by looking at the element $\Delta(x_i,x_k)$ we can know whether $x_k(\alpha)$ equals $0$ or $1$ (the former if $\Delta(x_i,x_k)>\alpha$, the latter otherwise). Since $\Delta(x_i,x_k)=\Delta(y_i,y_k)=\Delta(z_i,z_k)$, we will have that $\{k<n \mid x_k(\alpha)=0\}=\{k<n \mid y_k(\alpha)=0\}=\{k<n \mid z_k(\alpha)=0\}$, let us call this set $I$. Letting $x'=\{x_k \mid k\in I\}$, $y'=\{y_k \mid k\in I\}$ and $z=\{z_k \mid k\in I\}$, we can see now that $c(x')=c(y')=c(z)$ (since each of these matrices arises from specifically taking the $(k,k')$-th entries of the matrix $c(x),c(y),c(z)$, for $k,k'\in I$). Moreover, $x'\subseteq x\setminus\{x_j\}$ and so $ | x' | <n$; furthermore, it is not hard to see that $z=x'\bigtriangleup y'$. This gives us two elements $x',y'$ with $c(x')=c(y')=c(x'\bigtriangleup y')$ and $ | x' | < | x | $, which contradicts the choice of $x$ and $y$.

This contradiction shows that for no $x,y$ can the set $\fs(\{x,y\})$ be monochromatic. Hence the colouring $c$ witnesses $\B(2^\kappa)\nrightarrow(2)_\kappa^\fs$, and we are done.
  \end{proof}

The anonymous referee has pointed out that an unpublished result of Komj\'ath's extends Theorem~\ref{lowerbound} to all small abelian groups. That is, for every abelian group $G$ with $|G|\leq 2^\kappa$, it is the case that $G\nrightarrow(2)_\kappa^\fs$.

We finish this section with a brief discussion of the upper bounds for Komj\'ath's result on Boolean groups. Given an infinite cardinal $\kappa$ and an $n\in\mathbb N$, denote by $\beta(n,\kappa)=\min\{\lambda \mid \mathbb B(\lambda)\rightarrow(n)_\kappa^\fs\}$. Thus ~\cite[Theorem 1]{komjath} implies that $\beta(n,\kappa)\leq\beth_{2^{n-1}}(\kappa)^+$ (and in particular, $\sup_{n<\omega}\beta(n,\kappa)\leq\beth_\omega(\kappa)$). And our Theorem~\ref{lowerbound} states that $\beta(2,\kappa)=(2^\kappa)^+$ (and in particular, $2^\kappa<\sup_{n<\omega}\beta(n,\kappa)$). This yields the following characterization of strong limit cardinals in terms of the arrow relations satisfied by Boolean groups. The proof is immediate.

\begin{proposition}
Let $\lambda$ be an uncountable cardinal. Then the following are equivalent:
\begin{itemize}
\item $(\forall\kappa<\lambda)(\forall n\in\mathbb N)(\mathbb B(\lambda)\rightarrow(n)_\kappa^\fs)$;
\item $\lambda$ is a strong limit.
\end{itemize}
\end{proposition}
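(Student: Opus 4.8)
The plan is to establish both implications of the equivalence by leveraging the two sharp estimates already recorded in the surrounding text, namely $\beta(2,\kappa)=(2^\kappa)^+$ from Theorem~\ref{lowerbound} together with the upper bound $\beta(n,\kappa)\leq\beth_{2^{n-1}}(\kappa)^+$ coming from Komj\'ath's result. Throughout, I would read the first bullet as the assertion that $\B(\lambda)$ satisfies every finitary Hindman arrow relation against any strictly smaller palette of colours, and the second as the classical characterization of strong limit cardinals.

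First I would prove that the strong limit condition implies the arrow relations. Assume $\lambda$ is a strong limit, and fix an arbitrary $\kappa<\lambda$ and $n\in\mathbb N$. By the upper bound, it suffices to verify that $\beth_{2^{n-1}}(\kappa)^+\leq\lambda$, since any group of the form $\B(\mu)$ with $\mu\geq\beth_{2^{n-1}}(\kappa)^+$ arrows, and $\B(\lambda)$ contains a copy of such a Boolean group whenever $\lambda\geq\beth_{2^{n-1}}(\kappa)^+$. Because $\lambda$ is a strong limit, iterating the defining closure property $2^\mu<\lambda$ for every $\mu<\lambda$ gives $\beth_j(\kappa)<\lambda$ for every finite $j$ by a trivial induction on $j$ (here $\kappa<\lambda$ is the base case). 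In particular $\beth_{2^{n-1}}(\kappa)<\lambda$, and since $\lambda$ is a limit cardinal we get $\beth_{2^{n-1}}(\kappa)^+\leq\lambda$ as required, so $\B(\lambda)\rightarrow(n)_\kappa^\fs$.

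For the reverse implication I would argue contrapositively: suppose $\lambda$ is \emph{not} a strong limit, so that there is some $\kappa<\lambda$ with $2^\kappa\geq\lambda$, and I must exhibit an $n$ and a palette smaller than $\lambda$ for which the arrow fails. Here I would invoke Theorem~\ref{lowerbound} with $n=2$: it gives $\B(2^\kappa)\nrightarrow(2)_\kappa^\fs$, and since $2^\kappa\geq\lambda$ the group $\B(\lambda)$ embeds in $\B(2^\kappa)$, so the same panchromatic-style colouring (restricted to the embedded copy) witnesses $\B(\lambda)\nrightarrow(2)_\kappa^\fs$ with $\kappa<\lambda$. This contradicts the first bullet, completing the contrapositive.

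The only genuine subtlety is the bookkeeping with the colour count versus the index $\kappa$. In the first bullet the quantifier ranges over \emph{all} $\kappa<\lambda$, so for the $n=2$ failure above I need the particular $\kappa$ witnessing non-strong-limitness to be strictly below $\lambda$, which is exactly how strong-limit failure was phrased; I should double-check there is no boundary case where $2^\kappa=\lambda$ with $\kappa$ a limit of such values causes trouble, but since failure of strong limit yields a \emph{single} $\kappa<\lambda$ with $2^\kappa\geq\lambda$, this is immediate. I expect no real obstacle here, which is consistent with the paper's own remark that ``the proof is immediate''; the work is entirely in correctly matching the sharp bounds $\beta(2,\kappa)=(2^\kappa)^+$ and $\beta(n,\kappa)\leq\beth_{2^{n-1}}(\kappa)^+$ to the two directions of the equivalence.
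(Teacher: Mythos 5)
Your proof is correct and is exactly the argument the paper intends: the paper states the proposition is ``immediate'' from the two bounds it has just recorded, namely Komj\'ath's upper bound $\beta(n,\kappa)\leq\beth_{2^{n-1}}(\kappa)^+$ (giving the arrow relations under the strong limit hypothesis, via the finite induction $\beth_j(\kappa)<\lambda$) and Theorem~\ref{lowerbound} (giving the $n=2$ failure when some $\kappa<\lambda$ has $2^\kappa\geq\lambda$, by restricting the bad colouring to the embedded copy of $\mathbb B(\lambda)$). Your handling of the monotonicity of the positive and negative arrow relations under subgroup embeddings is the right bookkeeping and matches what the paper leaves implicit.
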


\section{Some negative results}\label{laseccioncorrespondiente}

In this section, we state and prove a number of negative Ramsey-theoretic results. Two of these arose from an (unsuccessful) attempt at generalizing analogous results from the context of finite colourings, when trying to increase the number of colours to an infinite number. There are two more results that also arise from an (also unsuccessful) attempt at generalizing some results that hold for Boolean groups.

The first result that we mention arises directly from Hindman's theorem, by considering the situation with infinite colourings. Recall that we denote by $\mathbb P$ the set of prime numbers along with $0$, and $\mathbb Z[0^\infty]$ is understood to be $\mathbb Q$ by convention.

\begin{theorem}\label{negativeinfinitecolours}
Let $G$ be an infinite abelian group. Then $G \nrightarrow(\omega)_\omega^{\fs}$. 
\end{theorem}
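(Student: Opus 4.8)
The plan is to produce, for an arbitrary infinite abelian group $G$, a colouring $c:G\longrightarrow\omega$ with $\omega$ colours such that \emph{no} infinite $X\subseteq G$ has $\fs(X)$ monochromatic. The strategy is to exploit the structural theorem from the Introduction: embed $G$ into $\bigoplus_{p\in\mathbb P}\bigl(\bigoplus_{\alpha\in I_p}\mathbb Z[p^\infty]\bigr)$, and split into two cases according to whether $G$ has infinitely many elements of finite order or not. In each case I would build a colouring that uses the finitary structure of finite sums to force a ``growth'' or ``parity/order'' obstruction, so that any infinite $X$ generates finite sums of unboundedly or incompatibly many distinct colours.

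First I would handle the torsion-free-heavy case. If all but finitely many elements of $G$ have infinite order, then (after projecting to $\pi_0$, i.e. to the $\mathbb Q$-part) the relevant summands embed into $\bigoplus_\alpha\mathbb Q\subseteq\bigoplus_\alpha\mathbb R$, and I would reuse the quadratic-form idea from the proof of Theorem~\ref{impossiblefor3}: colour $x$ by something like $c(x)=\sum_\alpha\pi_0(x)(\alpha)^2$ (suitably discretized to land in $\omega$). The point is that for an infinite independent $X$ (available by the discussion around Lemma~\ref{lemma:indseq} once $G$ is uncountable, and by a direct greedy choice in the countable case) the finite sums $\sum_{x\in F}x$ have norms that cannot all coincide as $|F|$ grows, since the norm of a growing independent sum must eventually increase. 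This gives infinitely many distinct colours among $\fs(X)$, defeating monochromaticity.

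The complementary case is when $G$ has infinitely many elements of finite order; here I would pass (via Lemma~\ref{lemma:lambdaembed}-style reasoning, or directly) to a large subgroup of elements of a fixed prime order $p$, where the relevant incarnation is a Boolean-type group $\bigoplus_\alpha(\mathbb Z/p\mathbb Z)$, and colour each $x$ by the mapping $\sigma(x)$ — its finite sequence of nonzero entries — which is the ``canonical colouring'' discussed after Definition~\ref{defpattern}, landing in the countable set $(\mathbb Z/p\mathbb Z)^{<\omega}$. For an infinite $X$, one argues that $\sigma$ cannot be constant on $\fs(X)$: for instance, $\sigma$ records the length (support size) of each finite sum, and among finite sums of an infinite independent family the support sizes are unbounded, so infinitely many distinct $\sigma$-values occur. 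Combining the two cases (and noting that only finitely many leftover elements of the ``wrong'' type can be absorbed without affecting infinitude of $X$) yields $G\nrightarrow(\omega)_\omega^\fs$ in full generality.

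The main obstacle I expect is bookkeeping at the interface of the two cases — in particular, making a single colouring $c:G\to\omega$ that behaves correctly when an infinite $X$ mixes torsion and torsion-free elements, or spreads across infinitely many distinct primes $p$. The clean fix is to colour each $x$ by a \emph{pair} $(c_0(x),c_1(x))$ (encoded into $\omega$ via a pairing function), where $c_0$ captures the $\mathbb Q$-part norm and $c_1$ captures the $\sigma$-data of the torsion part; then for any infinite $X$ at least one coordinate must vary along $\fs(X)$, since an infinite independent sequence exists by Lemma~\ref{lemma:indseq} (in the uncountable case) or is constructed directly (in the countable case, where one still has infinitely many elements of unbounded order or unbounded support). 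Verifying that one of the two obstructions always triggers, uniformly over the possible shapes of $X$, is the crux; everything else is routine.
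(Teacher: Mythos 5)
Your colouring design is essentially sound, and in fact close to the paper's: the paper uses the single colouring $c(x)=(\sigma(\pi_p(x)) \mid p\in\mathbb P)$, i.e.\ it applies $\sigma$ to the $\mathbb Q$-part as well, whereas you replace the $\mathbb Q$-coordinate by the quadratic form and would close that case via Lemma~\ref{leader}; either way one gets countably many colours. (One small repair needed in your ``norm'' case: Lemma~\ref{leader} requires three \emph{distinct} vectors, and distinct elements of $X$ may have equal $\pi_0$-projections; but then the sum of two such elements has $\mathbb Q$-norm $4r\neq r$, so that case still closes.) The genuine gap is in the torsion case. First, the quantifiers are turned around: $X$ is handed to you by the adversary, so you cannot ``pass to a large subgroup of elements of a fixed prime order'' --- a monochromatic $X$ of torsion elements may mix unboundedly many orders across infinitely many primes, and the argument must handle it as given. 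Relatedly, your appeal to infinite independent sequences fails: Lemma~\ref{lemma:indseq} requires $X$ to be uncountable, and the paper notes immediately after that lemma that countable groups such as $\mathbb Z$ have \emph{no} infinite independent sequences at all, so ``direct greedy choice in the countable case'' is not available. Fortunately, independence is not what either obstruction actually needs.

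Second, and more importantly, the crux of the torsion case --- your assertion that among finite sums of an infinite family the support sizes are unbounded --- is precisely what must be proved, and nothing in your sketch proves it: a priori, cancellation could keep the supports of finite sums small. The paper supplies the missing idea. Monochromaticity under the $\sigma$-colouring forces (i) all elements of $\fs(X)$ to have supports of one fixed finite size $n$, and (ii) distinct elements of $X$ to have \emph{distinct} supports (this needs an argument, since a group such as $\mathbb Z[p^\infty]$ contains infinitely many elements with the same singleton support; it holds because two elements with equal support and equal $\sigma$-data are equal). Then the finite version of the $\Delta$-system lemma, applied to the infinite family $\{\supp(x)\mid x\in X\}$ of $n$-element sets, yields an infinite $Y\subseteq X$ whose supports form a $\Delta$-system; summing any $n+1$ elements of $Y$, the entries on the nonempty, pairwise disjoint petals cannot cancel, so the sum has support of size at least $n+1$, contradicting (i). Without this $\Delta$-system step (or an equivalent combinatorial substitute), your torsion case does not go through, so the proposal is incomplete exactly at the point you yourself identify as the crux.
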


\begin{proof}
Suppose that $|G|=\lambda$. By the injectivity of divisible groups (as explained in the Introduction), $G$ embeds in $\bigoplus_{p\in\mathbb P}\left(\bigoplus_{\alpha\in I_p}Z[p^\infty]\right)$. We define the colouring $c$ on $G$ by $c(x):=(\sigma(\pi_p(x)) \mid p\in\mathbb P)$ (recall that $\sigma$ maps each element to its sequence of nonzero entries). This is a colouring with countably many colours, now we will show that there cannot be an infinite set $X$ with $\fs(X)$ monochromatic for this colouring. Suppose otherwise, and let $X$ be such that $c[\fs(X)]=\{(s_p \mid p\in\mathbb P)\}$. First of all, note that this implies, in particular, that all elements of $X$ have supports of the same size. Note, however, that it is not possible for two distinct $x,y\in X$ to have the exact same support, for, if $\supp(x)=\supp(y)$ and $(\sigma(\pi_p(x))\mid p\in\mathbb P)=c(x)=(\sigma_p\mid p\in\mathbb P)=c(y)=(\sigma(\pi_p(y))\mid p\in\mathbb P)$, then it must actually be the case that $x=y$. Hence the set $\{\supp(x)\mid x\in X\}$ is an infinite set, all of whose elements have the same fixed cardinality, and so we can apply the $\Delta$-system lemma\footnote{Or rather, the following version of the $\Delta$-system lemma: if $n<\omega$ and $X$ is an infinite family of sets of cardinality $n$, then there is an infinite $Y\subseteq X$ which forms a $\Delta$-system. This is easy to prove by induction on $n$, essentially using the technique outlined in~\cite[Exercise II.1, p. 86]{kunen}.} to it and obtain an infinite subset $Y\subseteq X$ that forms a $\Delta$-system. Now if we take $n+1$ distinct elements $y_0,\ldots,y_n\in Y$, the support of the sum $y_0+\cdots+y_n\in\fs(Y)\subseteq\fs(X)$ is guaranteed to have at least $n+1$ elements, contradicting that $c(y_0+\cdots+y_n)=n$. The proof is complete.
\end{proof}

We are grateful to the anonymous referee for pointing us toward using the $\Delta$-system lemma for the proof of Theorem~\ref{negativeinfinitecolours}, in a way that is reminiscent of~\cite[Theorem 1]{partitionpowerset}. The referee also directed us towards the following observation. By performing a standard compactness argument to the version of the $\Delta$-system that was used in the previous proof, one can obtain the following result: given any two fixed $k$ and $n$, there exists a number $\Delta(k,n)$ satisfying that whenever one has a sequence $x_1,\ldots,x_{\Delta(k,n)}$ of distinct sets of cardinality $k$, it is possible to take $n$ of them that form a $\Delta$-system. Thus, given a group $G$, if we let $c$ be the colouring of the proof of Theorem~\ref{negativeinfinitecolours}, and $\langle s_n\mid n<\omega\rangle$ be an injective sequence of $\ran(c)$, the same argument used in that proof actually shows that 
\begin{equation*}
G\nrightarrow(\Delta(|s_0|,|s_0|+1),\Delta(|s_1|,|s_1|+1),\ldots,\Delta(|s_n|,|s_n|+1),\ldots)^\fs,
\end{equation*}
meaning that there exists a colouring $c:G\longrightarrow\omega$ such that there is no $X$ of cardinality $\Delta(|s_0|,|s_0|+1)$ with $\fs(X)$ monochromatic in colour $0$, and there is no $X$ of cardinality $\Delta(|s_1|,|s_1|+1)$ with $\fs(X)$ monochromatic in colour $1$, and so on and so forth (since for any $X$ of size $\Delta(|s_n|,|s_n|+1)$ with $\fs(X)$ monochromatic in colour $s_n$, we would be able to pick $x_0,\ldots,x_{|s_n|}\in X$ forming a $\Delta$-system, and so the support of $x_0+\cdots+x_{|s_n|}$ would have at least $|s_n|+1$-many elements, a contradiction). In fact, by appropriately refining the colouring $c$ one can get a similar negative arrow relation where the sequence inside the parentheses is any arbitrary unbounded sequence. In particular, it is the case that every infinite abelian group $G$ satisfies the relation $G\nrightarrow(2,3,4,\ldots)^\fs$.

The following theorem arises from consideration of van der Waerden's theorem. Recall that van der Waerden's theorem~\cite{vanderwaerden} states that for every finite colouring of $\mathbb N$, there will be arbitrarily long arithmetic progressions that are monochromatic. In an abelian group $G$, we define an arithmetic progression of length $l$ to be a set of the form $\{a,a+b,\ldots,a+(l-1)b\}$ for $a,b\in G$ and $b\neq 0$. It is possible to generalize van der Waerden's theorem by proving that, in any infinite abelian group, every finite colouring of the group will give rise to arbitrarily long arithmetic progressions that are monochromatic (for example, such a statement follows from the Central Sets Theorem for abelian groups in the same way that the classical van der Waerden's theorem follows from the Central Sets Theorem for $\mathbb N$, see e.g. \cite[Corollary 14.13]{hindmanstrauss}). Now for infinite colourings, the situation splits: in a Boolean group, an arithmetic progression $\{a,a+b,\ldots,a+(l-1)b\}$ of length $l$ is simply equal to $\{a,a+b\}$ (as long as $l\geq2$), so every colouring of a Boolean group with less colours than the cardinality of the group is bound to have arbitrarily long arithmetic progressions, just by the pigeonhole principle. On the other hand, when we consider arbitrary abelian groups, with the restriction that they do not contain any elements of order $2$ (to completely discard the Boolean case), we obtain a negative result, even for the shortest possible length of an arithmetic progression that would render the statement nontrivial. We must point out that the following theorem, in the particular case where $G$ is the additive group of a vector space, was proved by Rado (see e.g. \cite[Theorem 3.2]{komjath-euclidean}). Although the proof is essentially the same, we still reproduce it here for the convenience of the reader. We are grateful to the anonymous referee for pointing out Rado's result, as well as reference~\cite{komjath-euclidean}.

\begin{theorem}\label{negativevanderwaerden}
    Let $G$ be an infinite abelian group with no elements of order 2. Then there exists $c: G\longrightarrow\omega$ such that for no two $a, b \in G$ with $b \ne 0$ do we have that $\{a, a+b, a+2b\}$ is monochromatic for $c$.
\end{theorem}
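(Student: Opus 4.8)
The plan is to avoid building a new colouring from scratch and instead reuse the canonical colouring $c(x)=(\sigma(\pi_p(x))\mid p\in\mathbb P)$ from the proof of Theorem~\ref{negativeinfinitecolours}, showing that the hypothesis ``no elements of order $2$'' is exactly what is needed to rule out monochromatic $3$-term progressions (rather than monochromatic finite-sum sets). The guiding observation is that for a progression $a,a+b,a+2b$ the three terms are forced to agree on all coordinates lying below $\min\supp(b)$, while at that bottom coordinate they form a genuine one-dimensional progression; the canonical colouring detects this disagreement one ``slot'' at a time.

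First I would set up the reduction exactly as in the Introduction: embed $G$ into $H=\bigoplus_{p\in\mathbb P}\bigoplus_{\alpha\in I_p}\mathbb Z[p^\infty]$ via injectivity of divisible groups. The key point is that, since $G$ has no element of order $2$, its $2$-primary torsion is trivial (a nonzero element of order $2^k$ would give one of order $2$), so we may take $I_2=\varnothing$; hence no coordinate group $\mathbb Z[p^\infty]$ (odd $p$) or $\mathbb Q$ that occurs has an element of order $2$, and multiplication by $2$ is injective on each coordinate and therefore on $H$. I then colour $H$, and restrict to $G$, by $c(x)=(\sigma(\pi_p(x))\mid p\in\mathbb P)$, which uses only countably many colours for precisely the reason recorded in the proof of Theorem~\ref{negativeinfinitecolours} (each $x$ has finite support, so all but finitely many $\pi_p(x)$ vanish, and each $\mathbb Z[p^\infty]$ is countable).

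Next I would analyse a putative monochromatic progression. Given $a,b\in G$ with $b\neq0$, set $x=a$, $y=a+b$, $z=a+2b$; injectivity of doubling makes these three distinct. Let $\gamma=\min\supp(b)$ and let $p$ be the prime with $\gamma\in I_p$. For every $\alpha<\gamma$ we have $b(\alpha)=0$, so $x,y,z$ agree on all coordinates below $\gamma$; in particular $\sigma(\pi_p(x)),\sigma(\pi_p(y)),\sigma(\pi_p(z))$ (which list nonzero entries in increasing order of index) share a common initial segment of some length $t$, coming from the $I_p$-coordinates below $\gamma$. At the coordinate $\gamma$ the values $p_0=x(\gamma)$, $q_0=p_0+b(\gamma)$, $r_0=p_0+2b(\gamma)$ form a progression with common difference $b(\gamma)\neq0$; since the coordinate group at $\gamma$ has no element of order $2$ we also get $2b(\gamma)\neq0$, whence $p_0,q_0,r_0$ are pairwise distinct, so at most one of them equals $0$. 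Slot $t$ of each sequence is the entry at the least support-index $\geq\gamma$, which is $\gamma$ itself whenever the corresponding element is nonzero there.

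The conclusion is then a short case check on slot $t$: if all of $p_0,q_0,r_0$ are nonzero, slot $t$ of the three sequences is $p_0,q_0,r_0$, three distinct values, so the three colours are distinct; if exactly one of them vanishes, the other two elements still have their two \emph{distinct} nonzero $\gamma$-values sitting in slot $t$ (they share the same prefix length $t$, as they agree below $\gamma$), so those two sequences, hence those two colours, differ. Either way $c$ is not constant on $\{a,a+b,a+2b\}$, as required. I expect the main obstacle to be exactly this last case analysis, together with its prerequisite bookkeeping: choosing $\gamma=\min\supp(b)$ (not $\max$) so that the shared data of the three terms forms an aligned common \emph{prefix} of the order-preserving sequences $\sigma(\pi_p(\cdot))$, and handling the ``one vanishing coordinate'' case by comparing only the two surviving terms; the role of the no-order-$2$ hypothesis must be invoked twice, first to force $I_2=\varnothing$ (ensuring countability is unaffected and the doubling map is injective) and again to guarantee $p_0,q_0,r_0$ are pairwise distinct.
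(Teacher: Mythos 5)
Your proposal is correct and takes essentially the same route as the paper's proof: the same divisible-hull embedding, the same canonical colouring $c(x)=(\sigma(\pi_p(x))\mid p\in\mathbb P)$, and the same key step of aligning the $\sigma$-sequences along coordinates below the least support index of $b$ and then using that $a(\gamma)$, $a(\gamma)+b(\gamma)$, $a(\gamma)+2b(\gamma)$ are pairwise distinct in a coordinate group without $2$-torsion. The only differences are cosmetic: you take $\gamma=\min\supp(b)$ globally and case-split on how many of the three $\gamma$-entries vanish, whereas the paper first fixes a prime $p\neq 2$ with $\pi_p(b)\neq 0$ and splits on whether $a(\beta)=0$; your explicit remark that the no-order-$2$ hypothesis forces $I_2=\varnothing$ is in fact a slightly cleaner rendering of the paper's compressed opening step.
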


\begin{proof}
Embed $G$ in $\bigoplus_{p\in\mathbb P}\left(\bigoplus_{\alpha\in I_p}Z[p^\infty]\right)$. Just like in the proof of Theorem~\ref{negativeinfinitecolours}, we define the colouring $c$ on $G$ by letting $c(x)=(\sigma(\pi_p(x)) \mid p\in\mathbb P)$. Suppose that $a,b\in G$, with $b\neq 0$ are such that $\{a,a+b,a+2b\}$ is monochromatic, say with colour $(s_p \mid p\in\mathbb P)$. Since at least one of (in fact, at least two of) $a,a+b,a+2b$ is nonzero, and $G$ does not have elements of order $2$, it follows that $s_p\neq\varnothing$ for some $p\in\mathbb P\setminus\{2\}$. Let $\beta=\min(\supp(\pi_p(b)))$. Note that $b(\beta)\neq 2b(\beta)$, and both of these are nonzero. Thus if $a(\beta)\neq0$, we have that at least two of $a(\beta),a(\beta)+b(\beta),a(\beta)+2b(\beta)$ are nonzero, and for these two elements, this nonzero entry occupies the same index within $s_p$, however no two of these three elements are equal, a contradiction. On the other hand, if $a(\beta)=0$, then the entries $a(\beta)+b(\beta),a(\beta)+2b(\beta)$, which are distinct, correspond to the same element of $s_p$ for $a+b$ and $a+2b$, but since these entries are distinct, we get another contradiction.
\end{proof}

We now switch from considering arithmetic progressions, to considering subgroups. Given an abelian group $G$, we will use the symbol $G\rightarrow(\kappa)_\theta^{\langle\cdot\rangle}$ to denote the statement that for every colouring $c:G\longrightarrow\theta$, there exists a subgroup $H\subseteq G$ that can be generated with $\kappa$-many elements and such that $H\setminus\{0\}$ is monochromatic (by colouring $0$ with its own colour, different from everybody else's, we can ensure that no nontrivial group is monochromatic; hence we explicitly remove $0$ from our subgroups to avoid trivially false statements). If we look at an infinite Boolean group $\mathbb B$, by Hindman's theorem for every colouring of $\mathbb B$ there will be an infinite $X$ such that $\fs(X)$ is monochromatic. Note that, in the Boolean case, $\fs(X)$ coincides with $H\setminus\{0\}$, where\footnote{Provided that $X$ is a linearly independent set over $\mathbb Z/2\mathbb Z$. Otherwise, $\fs(X)=H$.} $H$ is the subgroup of $\mathbb B$ generated by $X$. Hence any infinite Boolean group $\mathbb B$ satisfies the statement $\mathbb B\rightarrow(\omega)_2^{\langle\cdot\rangle}$ (and by the Folkman--Rado--Sanders theorem, for every (finite) $n$ there exists an (finite) $m$ such that, if $\mathbb B$ is the Boolean group of size $2^m$, then $\mathbb B\rightarrow(n)_2^{\langle\cdot\rangle}$). The theorem below establishes that this property fails for any groups that do not have Boolean subgroups.

\begin{theorem}
	Let $G$ be an abelian group with no elements of order 2. Then $G \nrightarrow (1)^{\langle \cdot \rangle}_2$ (and \textit{a fortiori}, $G\nrightarrow(\kappa)_2^{\langle\cdot\rangle}$ for every nonzero cardinal $\kappa$).
\end{theorem}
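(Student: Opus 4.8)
The plan is to prove this negative result by exhibiting a single colouring $c:G\longrightarrow 2$ witnessing $G\nrightarrow(1)_2^{\langle\cdot\rangle}$; that is, a $2$-colouring for which no nontrivial cyclic subgroup $\langle g\rangle$ has $\langle g\rangle\setminus\{0\}$ monochromatic. The key observation, and the only place where the hypothesis enters, is that the absence of order-$2$ elements makes the negation map $x\longmapsto-x$ a fixed-point-free involution on $G\setminus\{0\}$: indeed $x=-x$ is equivalent to $2x=0$, which for $x\neq0$ would produce an element of order $2$. Consequently $G\setminus\{0\}$ decomposes into two-element orbits $\{g,-g\}$, and this is exactly the structure I will colour against.

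First I would fix (using a well-ordering of $G$, which is available in $\zfc$) a choice assigning to each pair $\{g,-g\}$ one distinguished element; concretely, set $c(g)=0$ if $g$ precedes $-g$ in the well-order and $c(g)=1$ otherwise, and colour $0$ arbitrarily. Since $g\neq-g$ for every $g\neq0$, this is well-defined and gives $c(g)\neq c(-g)$ for all nonzero $g$. Now take any $g\neq 0$. Both $g$ and $-g=(-1)g$ lie in $\langle g\rangle$, they are distinct (no order $2$) and nonzero, so $\{g,-g\}\subseteq\langle g\rangle\setminus\{0\}$ is a pair of elements receiving different colours. Therefore $\langle g\rangle\setminus\{0\}$ is not monochromatic, which is precisely what $G\nrightarrow(1)_2^{\langle\cdot\rangle}$ asserts.

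The ``a fortiori'' clause then follows immediately: if $\kappa\geq1$ and $H\subseteq G$ is any nontrivial subgroup generated by $\kappa$ elements, pick a nonzero $h\in H$; then $\langle h\rangle\subseteq H$, so the pair $\{h,-h\}\subseteq H\setminus\{0\}$ again witnesses that $H\setminus\{0\}$ is not monochromatic, giving $G\nrightarrow(\kappa)_2^{\langle\cdot\rangle}$. I do not expect a serious obstacle here: the argument is entirely elementary once the fixed-point-free involution is spotted, so the difficulty is purely conceptual rather than technical. The one subtlety worth flagging is that the hypothesis is used in full strength and is genuinely necessary---if $G$ had an element $g$ of order $2$, then $\langle g\rangle\setminus\{0\}=\{g\}$ would be a singleton and hence trivially monochromatic under every colouring, consistently with the positive behaviour of Boolean groups recorded just before the theorem. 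The only non-combinatorial ingredient is the appeal to a well-ordering (equivalently, a choice function on the set of orbits) to make the colouring well-defined.
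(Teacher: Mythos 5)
Your proof is correct, and it takes a genuinely different and more elementary route than the paper's. The paper's proof embeds $G$ into its divisible hull $\bigoplus_{p\in\mathbb P}\left(\bigoplus_{\alpha\in I_p}\mathbb Z[p^\infty]\right)$, colours a nonzero $x$ by the parity of $\ord_2(q_x)$, where $q_x$ is the first nonzero coordinate of $\pi_{p_x}(x)$ and $p_x$ is the least $p\in\mathbb P\setminus\{2\}$ with $\pi_p(x)\neq 0$ (well defined precisely because $G$ has no $2$-torsion), and then shows that any nontrivial subgroup contains two elements of the form $y$, $2y$ with different colours, after replacing a given $x\in H\setminus\{0\}$ by a suitable multiple $y=cx$ in the Pr\"ufer case to control reduction modulo $\mathbb Z$. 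You bypass the structure theory entirely: the absence of $2$-torsion makes negation a fixed-point-free involution on $G\setminus\{0\}$, a choice function on the orbits $\{g,-g\}$ gives a $2$-colouring with $c(g)\neq c(-g)$ for all $g\neq 0$, and every nontrivial subgroup contains such a pair; this single colouring handles all nonzero $\kappa$ at once, just as the paper's does, and your closing observation that the hypothesis is genuinely needed (an order-$2$ element $g$ gives the singleton $\langle g\rangle\setminus\{0\}=\{g\}$) is also correct. The trade-off is one of method rather than of statement: the paper's ``multiplying by a fixed element shifts the colour'' device is exactly the engine reused in Theorem~\ref{negativesubmodule} for free modules over an arbitrary UFD, a setting where your sign trick breaks down completely (for $R=\F_2[t]$, a UFD different from $\F_2$, every element of every $R$-module satisfies $x=-x$, yet the negative result for spans still holds), whereas your involution argument is special to $2$-torsion-free abelian groups but is, in that setting, about as short as a proof of this theorem could be.
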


\begin{proof}
Embed $G$ in $\bigoplus_{p\in\mathbb P}\left(\bigoplus_{\alpha\in I_p}Z[p^\infty]\right)$. We define the colouring $c$ on $G$ as follows. Given a nonzero (the colour of $0$ can be arbitrary) element $x\in G$, let $p_x=\min\{p\in\mathbb P\setminus\{2\} \mid \pi_p(x)\neq 0\}$, and let $\alpha_x=\min(\supp(\pi_{p_x}(x)))$. Let $q_x=x(\alpha_x)$, which will be either a rational number (if $p_x=0$), or the coset modulo $\mathbb Z$ of a rational number whose denominator is a power of $p_x$ (in which case we identify $q_x$ with the unique member of this coset that lies in $(0,1)$). We let $c(x)$ be $0$ or $1$ according to whether $\ord_2(q_x)$ is even or odd\footnote{Recall that, for $q\in\mathbb Q$, $\ord_2(q)=i$ if and only if $q=2^i\frac{a}{b}$, where $a,b\in\mathbb Z$ are coprime to $2$.}.

Suppose, for the sake of contradiction, that $H \subseteq G$ is a nontrivial subgroup with $H\setminus\{0\}$ monochromatic. Pick $x\in H\setminus\{0\}$. There are two cases, the first of which is if $p_x=0$. Then $q_x\in\mathbb Q$, and so clearly $\ord_2(q_{2x})=\ord_2(2q_x)=\ord_2(q_x)+1$, which implies that $x$ and $2x$ have distinct colours while both belonging to $H\setminus\{0\}$, a contradiction.

The second case is when $p_x$ is a positive prime number. In this case, $q_x$ must be of the form $\frac{a}{p_x^k}$ for some $k>0$, where $(a,p_x)=1$. Since $(a,p_x)=1$, the number $a$ must be a generator of the multiplicative group of units of the ring $\mathbb Z/p_x^k\mathbb Z$, which means that, if we pick our favourite $b$ satisfying $(b,p_x)=1$ and $0<b<\left\lfloor\frac{p_x^k}{2}\right\rfloor$, there will be an integer $c$ such that $ac\equiv b\mod p_x^k$. Thus if we let $y=cx$, we will have that $q_y=\frac{ca}{p_x^k}=\frac{b}{p_x^k}$, and $q_{2y}=2q_y=\frac{2b}{p_x^k}$, since $0<2b<p_x^k$ (that is, since $2b<p_x^k$, the representative in $(0,1)$ for the coset of $\frac{2b}{p_x^k}$ is itself). Now, clearly $\ord_2(q_{2y})=\ord_2\left(\frac{2b}{p_x^k}\right)=\ord_2(2b)=\ord_2(b)+1=\ord_2\left(\frac{b}{p_x^k}\right)+1=\ord_2(q_y)+1$, which means that $y$ and $2y$ have different colours, while both of them belong to $H\setminus\{0\}$, a contradiction.
\end{proof}

We will now address modules over rings. Let us consider first the field with two elements, $\mathbb F_2$. Every module over $\mathbb F_2$ is a Boolean group, and conversely every Boolean group is a module over $\mathbb F_2$. Furthermore, if $\mathbb B$ is a Boolean group and $X\subseteq\mathbb B$, then $\fs(X)$ is exactly\footnote{Unless $X$ is linearly dependent over $\mathbb F_2$, in which case $\fs(X)=H$.} $H\setminus\{0\}$, where $H$ is the span of the set $X$, when viewing $\mathbb B$ as a module over $\mathbb F_2$. Hence, if we let $M\rightarrow(\kappa)_\theta^\Span$ denote the statement that for every colouring of $M$ with $\theta$ colours there exists a submodule $N\subseteq M$ of dimension $\kappa$ such that $N\setminus\{0\}$ is monochromatic, we will have (as a consequence of Hindman's theorem) that $\mathbb B\rightarrow(\omega)_2^\Span$ whenever $\mathbb B$ is an infinite Boolean group (equivalently, an infinite $\mathbb F_2$-module). The theorem below establishes that we cannot have such a result for modules over other rings, at least if the modules are free.

\begin{theorem}\label{negativesubmodule}
	Let $R \ne \F_2$ be a Unique Factorization Domain (UFD)\footnote{That is, $R$ is a commutative ring with unit, without zero divisors, where every element can be decomposed as a product of irreducible elements in a unique way (up to order and associates).}. Then for every free $R$-module $M$, we have that $M \nrightarrow (1)_2^{\Span}$ (and \textit{a fortiori}, $M\nrightarrow(\kappa)_2^\Span$ for every positive $\kappa$).
\end{theorem}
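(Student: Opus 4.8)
The plan is to mimic the colourings used in the preceding theorems (Theorems~\ref{negativevanderwaerden} and its successor), exploiting a valuation-like invariant, but now working inside a free $R$-module where $R$ is a UFD strictly larger than $\F_2$. The key structural fact I would use is that, since $R\neq\F_2$ is a UFD, there is some element that is \emph{not} a unit; picking an irreducible $\pi\in R$ (which exists unless $R$ is a field, a case to be handled separately) gives a $\pi$-adic valuation $v_\pi:R\setminus\{0\}\longrightarrow\omega$ counting the multiplicity of $\pi$ in a factorization. This $v_\pi$ is well defined precisely because of unique factorization, and it satisfies $v_\pi(rs)=v_\pi(r)+v_\pi(s)$; in particular $v_\pi(\pi r)=v_\pi(r)+1$. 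The parity of such a valuation is what I would use to separate an element from one of its scalar multiples.

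First I would reduce to a single coordinate. Since $M$ is free, fix a basis and for a nonzero $x\in M$ let $\alpha_x$ be the least index in the support of $x$ (with respect to a fixed well-ordering of the basis), and let $q_x=x(\alpha_x)\in R\setminus\{0\}$ be the corresponding coordinate. Then I would define $c(x)\in\{0,1\}$ to be the parity of $v_\pi(q_x)$, colouring $0$ arbitrarily. The point is that for a nonzero $x$, the element $\pi x$ has the same least-support index $\alpha_x$, with coordinate $\pi q_x$, so that $v_\pi(q_{\pi x})=v_\pi(q_x)+1$; hence $c(x)\neq c(\pi x)$. If $N\subseteq M$ is any nontrivial submodule with $N\setminus\{0\}$ monochromatic, picking any $x\in N\setminus\{0\}$ gives $\pi x\in N$, and I must guarantee $\pi x\neq 0$: in a free module over a domain, scalar multiplication by a nonzero ring element is injective, so $\pi x=0$ forces $x=0$, a contradiction. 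Thus $x$ and $\pi x$ are two distinct nonzero elements of $N$ with different colours, contradicting monochromaticity. This establishes $M\nrightarrow(1)_2^\Span$, and the \emph{a fortiori} clause follows since any submodule of dimension $\kappa\geq1$ contains a one-dimensional (hence nontrivial) submodule.

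The case that needs separate care, and which I expect to be the main obstacle, is when $R$ is a field other than $\F_2$, for then every nonzero element is a unit and no irreducible $\pi$ exists, so the valuation trick collapses. Here, however, there is extra room: the field $R$ has at least three elements, so we can choose a nonzero non-identity scalar $\lambda\in R\setminus\{0,1\}$, and the multiplicative group $R^\times$ is nontrivial. The idea is to use a multiplicative character or simply a two-colouring of $R^\times$ that is not constant on any coset of the form $\{u,\lambda u\}$; concretely, one wants a colouring of $R\setminus\{0\}$ such that each nonzero $q$ and its multiple $\lambda q$ receive different colours. Whether such a two-colouring of the multiplicative group exists depends on the structure of $R^\times$—it amounts to a $2$-colouring of $R^\times$ with no monochromatic pair $\{u,\lambda u\}$—and handling all fields uniformly (including the finite fields $\F_{p^k}$ whose multiplicative groups are finite cyclic, where a naive parity argument can fail if $\lambda$ has even order) is the delicate part. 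I would either split into the ``$R$ has an element of infinite multiplicative order'' and ``$R^\times$ is torsion'' cases, or look for a single scalar $\lambda$ whose order in $R^\times$ is not a power of $2$ times a small factor, so that the induced colouring on the cyclic subgroup generated by $\lambda$ can be made non-constant on $\{u,\lambda u\}$; this field subcase is where the real work lies, whereas the UFD-with-an-irreducible case above is routine.
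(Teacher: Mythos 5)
Your treatment of the non-field case is correct, and it is essentially the paper's own proof in disguise: the paper also colours a nonzero $x$ by the parity of the exponent of a fixed irreducible $a$ in the leading coordinate $x(\min(\supp(x)))$. The paper constructs that exponent function laboriously, via the orbits $S_r=\{a^ir \mid i\in\mathbb N\}$, a chain argument, and a choice-picked system of inclusion-maximal representatives $X$; but inclusion-maximality of $S_{r_0}$ is exactly the statement $a\nmid r_0$, so the paper's colouring $g$ is precisely your $v_\pi \bmod 2$. Your route through unique factorization gets well-definedness and the identity $v_\pi(\pi r)=v_\pi(r)+1$ for free, with no appeal to choice or to maximal chains, so on this case your argument is a genuine simplification of the published one.

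The field case is where the substance of this review lies. Your diagnosis is right, and in fact sharper than you may realize: the paper's own proof has exactly this lacuna, since its instruction ``choose an irreducible element $a\in R$'' is vacuous when $R$ is a field, and fields other than $\F_2$ do satisfy the hypotheses of the theorem. However, your proposed repair is both harder than necessary and, in the specific form you suggest, unworkable: for $R=\F_4$ every $\lambda\notin\{0,1\}$ has multiplicative order $3$, so requiring $g(q)\neq g(\lambda q)$ for all $q\in R^\times$ amounts to properly $2$-colouring a $3$-cycle, which is impossible; no case split on the structure of $R^\times$ will rescue that formulation. What you are missing is that monochromaticity of $Rx\setminus\{0\}$ can be destroyed by \emph{any} two of its points, not necessarily a pair of the form $\{x,\lambda x\}$ for a fixed $\lambda$. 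With the leading-coefficient colouring $c(x)=g(q_x)$, the leading coefficients of the nonzero elements of the line $Rx$ run over the whole set $\{q\,q_x \mid q\in R^\times\}=R^\times$, because $R$ is a field. Hence \emph{any} non-constant $g\colon R^\times\longrightarrow 2$ (which exists, as $|R^\times|\geq 2$ when $R\neq\F_2$ is a field) already makes every punctured line bichromatic. With that two-line observation inserted, your proof is complete, and indeed covers a case that the paper's published argument silently omits.
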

\begin{proof}
   Since $R$ has no zero divisors, for each $a \in R\setminus\{0\}$ we have that $ar = as$ implies $r = s$. 
   Hence, the multiplicative action $f_a:R\longrightarrow R$ given by $f_a(r)=ar$ is an injection for every $a \in R\setminus\{0\}$. 
   Having chosen an arbitrary $a$, for every $r \in R$ let $S_r = \{a^ir:i \in \mathbb N\}$. 
   We claim that if $r, r' \in R$ are such that $S_r \cap S_{r'} \ne \varnothing$, then either $S_r \subset S_{r'}$ or $S_{r'} \subset S_r$. 
  For if $S_r \cap S_{r'} \ne \varnothing$, then by picking $a^nr = a^{n'}r'\in S_r\cap S_{r'}$, and assuming without loss of generality that $n \ge n'$, we see that $a^{n-n'}r = r'$, which implies that $r' \in S_r$ and therefore $S_{r'} \subset S_r$.
   
   Let us choose an irreducible element $a\in R$ for the above considerations. Then any chain of $S_{r_\alpha}$ must have a maximum.
   Suppose for the sake of contradiction that $(S_{r_\alpha}:\alpha < \lambda)$ is a strictly increasing sequence. 
   We may choose $r_\alpha$ such that for every $\xi < \alpha$,
   $r_\alpha \notin S_{r_\xi}$.
   With the same notation, we observe that $r_\xi = a^i r_\alpha$, and hence it follows that arbitrarily large powers of $a$ divide $r_0$.
   This is absurd in a UFD, which gives us our contradiction.
   
      By the axiom of choice, let $X \subset R$ be such that for every $r \in X$, $S_r$ is inclusion-maximal in the set $\{S_s \mid s \in R\}$ and for $r, r' \in X$ distinct, $S_r \cap S_{r'} = \varnothing$, and $\bigcup_{r \in X} S_r = R - \{0\}$. Then define $g: R \setminus\{0\} \to 2$ as $g(r) = i \mod 2$, where $r = a^ir_0$ for some $r_0 \in X$.
   
   If $\dim M = \lambda$ (where $\lambda$ need not be infinite), then set the colouring of $M = \bigoplus_\lambda R$ as follows:
   \[c(x) = g(x(\min(\supp(x))))\]
   Suppose that there is a submodule $N\subseteq M$ such that $N\setminus\{0\}$ is monochromatic, and let $x\in N\setminus\{0\}$. If $\alpha$ is the least index such that $x(\alpha) \ne 0$, notice that $g(ax(\alpha)) = g(x(\alpha)) + 1 \mod 2$. Thus, it follows that $c(x) =g(x(\alpha)) \ne g(ax(\alpha))=g((ax)(\alpha)) = c(ax)$, while $x,ax\in M$, a contradiction. 
\end{proof}

In~\cite{graham-rothschild}, it is proved that every finite colouring of an infinite vector space over a finite field yields arbitrarily large, but finite, monochromatic affine subspaces. If the field is $\mathbb F_2$, then in fact we can get an infinite monochromatic affine subspace (this statement follows directly from Hindman's theorem applied to Boolean groups). In~\cite{zelenyuk-affine} it is proved that every vector space over a finite field which is not $\mathbb F_2$ carries a colouring with countably many colours such that every infinite affine space meets all of the colours. The following result, along the lines of the ones just mentioned, is an immediate corollary of Theorem~\ref{negativevanderwaerden}. The one after that is another corollary of the same theorem, addressing the situation when one replaces ``affine subspace'' with ``translate of a subgroup''.

\begin{corollary}
   Let $R$ be a principal entire ring, $R\neq\mathbb F_2$, and $M$ be a free $R$-module. Then there is a colouring of $M$ with countably many colours such that no nontrivial affine subspace can possibly be monochromatic.
   \end{corollary}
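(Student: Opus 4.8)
The plan is to deduce the statement from Theorem~\ref{negativevanderwaerden} by locating, inside any nontrivial affine subspace, a three-term arithmetic progression $\{v,v+b,v+2b\}$ with $b\neq 0$, which that theorem forbids from being monochromatic. The essential preliminary observation is that a free module $M=\bigoplus_\lambda R$ over an integral domain $R$ has an element of order $2$ if and only if $1+1=0$ in $R$: if $1+1\neq 0$, then $2x=0$ forces $(1+1)x(\alpha)=0$ and hence $x(\alpha)=0$ in every coordinate $\alpha$, so $x=0$. Thus whenever $\mathrm{char}(R)\neq 2$, the additive group $M$ has no elements of order $2$, which is precisely the hypothesis required to apply Theorem~\ref{negativevanderwaerden}.

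Assuming $\mathrm{char}(R)\neq 2$, I would take $c\colon M\longrightarrow\omega$ to be the colouring produced by Theorem~\ref{negativevanderwaerden} (and, in the degenerate case where $R$ is a finite field of odd characteristic and $M$ happens to be finite, an injective colouring serves just as well, since a nontrivial affine subspace has at least two points). A nontrivial affine subspace is a coset $v+N$ of a nonzero $R$-submodule $N$; picking any $b\in N\setminus\{0\}$, the three elements $v$, $v+b$, $v+2b$ all lie in $v+N$, and $b\neq 0$. If $v+N$ were monochromatic, then $\{v,v+b,v+2b\}$ would be a monochromatic progression of exactly the kind excluded by Theorem~\ref{negativevanderwaerden}, a contradiction. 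Hence no nontrivial affine subspace is $c$-monochromatic, as desired.

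The step I expect to be the genuine obstacle is the characteristic-$2$ case, namely $R\neq\F_2$ with $1+1=0$ (for instance $R=\F_4$ or $R=\F_2[t]$). Here every nonzero element of $M$ has order $2$, so $2b=0$ and the triple $\{v,v+b,v+2b\}$ collapses to the two-point set $\{v,v+b\}$; the hypothesis of Theorem~\ref{negativevanderwaerden} fails and its arithmetic-progression mechanism gives no leverage. To handle this case one must exploit the multiplicative structure of $R$ rather than repeated addition: since $R\neq\F_2$ there is some $t\in R\setminus\{0,1\}$, and a line $\{v+rb\mid r\in R\}$ inside a nontrivial affine subspace already carries the distinct points $v$, $v+b$, $v+tb$. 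I would then try to adapt the ``least-index entry'' colouring from the proof of Theorem~\ref{negativesubmodule} (built from an irreducible element of $R$ when $R$ is not a field, and from a direct combinatorial colouring when $R$ is a finite field) so as to separate these collinear points. The delicate point is uniformity: the minimal support index of $v+rb$ may jump as $r$ varies over $R$, so verifying that a single countable colouring defeats every line simultaneously in $v$ and $b$ is where the real work lies—and it suggests that the corollary is ``immediate'' from Theorem~\ref{negativevanderwaerden} precisely in the characteristic-$\neq 2$ regime, the remaining case being logically separate.
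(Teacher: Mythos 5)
Your argument in the characteristic~$\neq 2$ case is precisely the paper's proof: the paper's entire justification for this corollary is the remark that a nontrivial affine subspace $a+N$ contains $a$, $a+b$, $a+2b$ for any $b\in N\setminus\{0\}$, so that the colouring of Theorem~\ref{negativevanderwaerden} cannot make $a+N$ monochromatic. Your preliminary verification that a free module over an integral domain with $1+1\neq 0$ has no elements of order $2$ (and the injective colouring in the degenerate finite case) only makes explicit steps that the paper leaves implicit.

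The characteristic-$2$ obstruction you single out is a genuine gap --- but it is a gap in the paper's own proof, not a shortcoming of yours relative to it. The stated hypothesis $R\neq\F_2$ admits, for instance, $R=\F_4$ and $R=\F_2[t]$, which are principal entire rings in which $1+1=0$; for these, every nonzero element of $M$ has additive order $2$, so Theorem~\ref{negativevanderwaerden} is inapplicable and the triple $\{a,a+b,a+2b\}$ collapses to $\{a,a+b\}$, exactly as you say. The paper offers no argument for such rings, and the cited result of Zelenyuk does not fill the hole either, since it concerns only \emph{infinite} affine subspaces, whereas over $\F_4$ a nontrivial affine subspace may be a four-point line. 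So the corollary is actually proved (by the paper or by you) only under the stronger hypothesis that $R$ has characteristic $\neq 2$. For what it is worth, your proposed repair does go through when $R$ is countable (in particular for all finite fields $\F_{2^k}$ and for $\F_2[t]$): colour each $x\in M$ by the full sequence $\sigma(x)$ of its nonzero entries, a countable colouring since $R$ is countable. Given $b\in N\setminus\{0\}$ with $\beta=\min(\supp(b))$, the elements $v+sb$ ($s\in R$) of $v+N$ all agree at coordinates below $\beta$ and take pairwise distinct values at coordinate $\beta$ (because $R$ is a domain); hence all but at most one of them are nonzero at $\beta$, and any two of those have $\sigma$-words agreeing on the common prefix and differing at the first entry after it, so they receive different colours. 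Since $R\neq\F_2$ forces $|R|\geq 3$, every such coset gets at least two colours. Recording the whole word, rather than only the least-index entry, is exactly what absorbs the ``jumping'' of $\min(\supp(v+sb))$ that worried you. For uncountable $R$ of characteristic $2$ this colouring has too many colours, so there a genuinely different argument is required; that case is indeed, as you put it, logically separate, and it is not addressed anywhere in the paper.
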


\begin{corollary}
   Let $G$ be an abelian group without elements of order $2$. Then there exists $c: G \to \omega$ such that for no nontrivial subgroup $H \subseteq G$ and $a \in G$ can the set $a + H$ be monochromatic.
\end{corollary}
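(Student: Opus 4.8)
The plan is to obtain this as an immediate consequence of Theorem~\ref{negativevanderwaerden}, exploiting the fact that every coset of a nontrivial subgroup automatically contains a three-term arithmetic progression. First I would take $c\colon G\longrightarrow\omega$ to be exactly the colouring furnished by Theorem~\ref{negativevanderwaerden} (this applies when $G$ is infinite; in the degenerate case of a finite $G$ one may instead colour $G$ injectively, since then any set with more than one element fails to be monochromatic, and this uses only finitely many, hence $\omega$-many, colours). The hypothesis that $G$ has no elements of order $2$ is precisely the hypothesis of Theorem~\ref{negativevanderwaerden}, so it is available to us throughout.

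Next, let $H\subseteq G$ be an arbitrary nontrivial subgroup and $a\in G$ arbitrary. Since $H$ is nontrivial I would pick some $b\in H\setminus\{0\}$. Because $H$ is a subgroup, both $b$ and $2b$ lie in $H$, and therefore the three elements $a$, $a+b$, $a+2b$ all belong to the coset $a+H$. These three elements are exactly the terms of an arithmetic progression of length $3$ with common difference $b\neq 0$, which is the configuration addressed by Theorem~\ref{negativevanderwaerden}.

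Finally, Theorem~\ref{negativevanderwaerden} guarantees that, for this colouring $c$, the set $\{a,a+b,a+2b\}$ is not monochromatic. As this set is a subset of $a+H$, the coset $a+H$ cannot be monochromatic either; and since $H$ and $a$ were arbitrary, the corollary follows. I do not expect any genuine obstacle here, as the argument is a direct specialization: the only point meriting a moment's care is the observation that $b\neq 0$ is enough to invoke Theorem~\ref{negativevanderwaerden} (the absence of elements of order $2$, needed so that the progression is nondegenerate, is already folded into the statement of that theorem).
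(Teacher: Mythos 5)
Your proposal is correct and takes essentially the same route as the paper: the paper also derives this corollary directly from Theorem~\ref{negativevanderwaerden}, by observing that for any nonzero $b\in H$ the coset $a+H$ contains the progression $a,a+b,a+2b$, which that theorem's colouring prevents from being monochromatic. Your explicit treatment of the finite-group case (where Theorem~\ref{negativevanderwaerden} does not literally apply) is a small point of care that the paper leaves implicit.
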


Both corollaries are proved by noting that, if $N\subseteq M$ is a submodule (resp. if $H\subseteq G$ is a subgroup) and $a\in M$ (resp. $a\in G$) then the affine subspace $a+N$ (resp. the set $a+H$) must contain $a,a+b,a+2b$ for some nonzero $b$.

\section*{Acknowledgements}
The first author would like to thank Peter Komj\'ath for sharing a preliminary version of~\cite{komjath}, which was the original motivation for this research; it is also worth mentioning that this author was partially supported by postdoctoral fellowship number 275049 from Conacyt--Mexico. The second author acknowledges partial support by NSF Grant \#DMS-1401384, as part of the University of Michigan Department of Mathematics REU program. Both authors are extremely grateful to the anonymous referee, for her careful reading of this paper and her multiple useful observations that helped improve it.

\end{document}